\definecolor{lightblue}{rgb}{.90,.95,1}
\DeclareMathOperator*{\argmax}{arg\,max}
\newtheorem{remark}{Remark}
\newtheorem{assumption}{\bf Assumption}
\title{On the minimum exit rate for a diffusion process pertaining to a chain of distributed control systems with random perturbations
\thanks{Received by the editors September 08, 2014. This work was supported in part by the National Science Foundation under Grant No. CNS-1035655. The first author acknowledges support from the Department of Electrical Engineering, University of Notre Dame.}}
\author{Getachew K. Befekadu\footnotemark[2] 
\and Panos J. Antsaklis\footnotemark[2]}
\begin{document}
\maketitle

\renewcommand{\thefootnote}{\fnsymbol{footnote}}

\footnotetext[2]{Department of Electrical Engineering, University of Notre Dame, Notre Dame, IN 46556, USA ({\tt gbefekadu1@nd.edu, antsaklis.1@nd.edu}).}

\renewcommand{\thefootnote}{\arabic{footnote}}

\begin{abstract}
In this paper, we consider the problem of minimizing the exit rate with which a diffusion process pertaining to a chain of distributed control systems, with random perturbations, exits from a given bounded open domain. In particular, we consider a chain of distributed control systems that are formed by $n$ subsystems (with $n \ge 2$), where the random perturbation enters only in the first subsystem and is then subsequently transmitted to the other subsystems. Furthermore, we assume that, for any $\ell \in \{2, \ldots, n\}$, the distributed control systems, which is formed by the first $\ell$ subsystems, satisfies an appropriate H\"{o}rmander condition. As a result of this, the diffusion process is degenerate, in the sense that the infinitesimal generator associated with it is a degenerate parabolic equation. Our interest is to establish a connection between the minimum exit rate with which the diffusion process exits from the given domain and the principal eigenvalue for the infinitesimal generator with zero boundary conditions. Such a connection allows us to derive a family of Hamilton-Jacobi-Bellman equations for which we provide a verification theorem that shows the validity of the corresponding optimal control problems. Finally, we provide an estimate on the attainable exit probability of the diffusion process with respect to a set of admissible (optimal) Markov controls for the optimal control problems.
\end{abstract}

\begin{keywords} 
Diffusion processes, distributed control systems, exit probabilities, HJB equations, Markov controls, minimum exit rates, principal eigenvalues
\end{keywords}

\begin{AMS}
34D20, 37H10, 37J25, 49K15, 49L20, 49L25
\end{AMS}

\pagestyle{myheadings}
\thispagestyle{plain}
\markboth{G. K. BEFEKADU AND P. J. ANTSAKLIS}{ON THE MINIMUM EXIT RATE FOR A DIFFUSION PROCESS}

\section{Introduction}	 \label{S1}

In this paper, we consider the problem of minimizing the exit rate with which the diffusion process $x^{ i}(t)$, for $i = 1, 2, \ldots, n$, exits from a given bounded open domain pertaining to the following distributed control systems with random perturbations (see Fig.~\ref{Fig-DCS})\footnote{This work is, in some sense, a continuation of our previous paper \cite{BefAn14}.}
\begin{align}
\left.\begin{array}{l}
d x^{1}(t) = m_1\bigl(x^{1}(t), u_1(t)\bigr) dt + \sigma\bigl(x^{1}(t))dW(t) \\
d x^{2}(t) = m_2\bigl(x^{1}(t), x^{2}(t), u_2(t)\bigr) dt  \\
 \quad\quad\quad~ \vdots  \\
d x^{i}(t) = m_{i}\bigl(x^{1}(t), x^{2}(t), \ldots, x^{i}(t), u_{i}(t)\bigr) dt  \\
 \quad\quad\quad~ \vdots  \\
d x^{n}(t) = m_n\bigl(x^{1}(t), x^{2}(t), \ldots, x^{n}(t), u_{n}(t)\bigr) dt\\
 x^{1}(0)=x_0^{1}, \,\, x^{2}(0)=x_0^{2}, \,\, \ldots, \,\, x^{n}(0)=x_0^{n}, \,\, t \ge 0
\end{array}\right\}  \label{Eq1} 
\end{align}
where
\begin{itemize}
\item[-] $x^{i}(\cdot)$ is an $\mathbb{R}^{d}$-valued diffusion process that corresponds to the $i$th-subsystem (with $i \in \{1,2, \ldots, n\}$ and $n \ge 2$),
\item[-] the functions $m_i \colon \mathbb{R}^{i \times d} \times \mathcal{U}_i \rightarrow \mathbb{R}^{d}$ are uniformly Lipschitz, with bounded first derivatives,
\item[-] $\sigma \colon \mathbb{R}^{d} \rightarrow \mathbb{R}^{d \times m}$ is Lipschitz with the least eigenvalue of $\sigma(\cdot)\,\sigma^T(\cdot)$ uniformly bounded away from zero, i.e., 
\begin{align*}
 \sigma(x)\,\sigma^T(x) \ge \lambda I_{d \times d} , \quad \forall x \in \mathbb{R}^{d},
\end{align*}
for some $\lambda > 0$,
\item[-] $W(\cdot)$ (with $W(0) = 0$) is a $d$-dimensional standard Wiener process,
\item[-] $u_i(\cdot)$ is a $\,\mathcal{U}_i$-valued measurable control process to the $i$th-subsystem (i.e., an admissible control from the measurable set $\mathcal{U}_i\subset \mathbb{R}^{r_i}$) such that for all $t > s$, $W(t)-W(s)$ is independent of $u_i(\nu)$ for $\nu \le s$ (nonanticipativity condition) and
\begin{align*}
\mathbb{E} \int_{0}^{t_1} \vert u_i(t)\vert^2 dt < \infty, \quad \forall t_1 \ge 0,
\end{align*}
\end{itemize}
for $i = 1, 2, \ldots, n$.

\begin{figure}[tbh]
\vspace{-3 mm}
\begin{center}
\subfloat{\includegraphics[width=36mm]{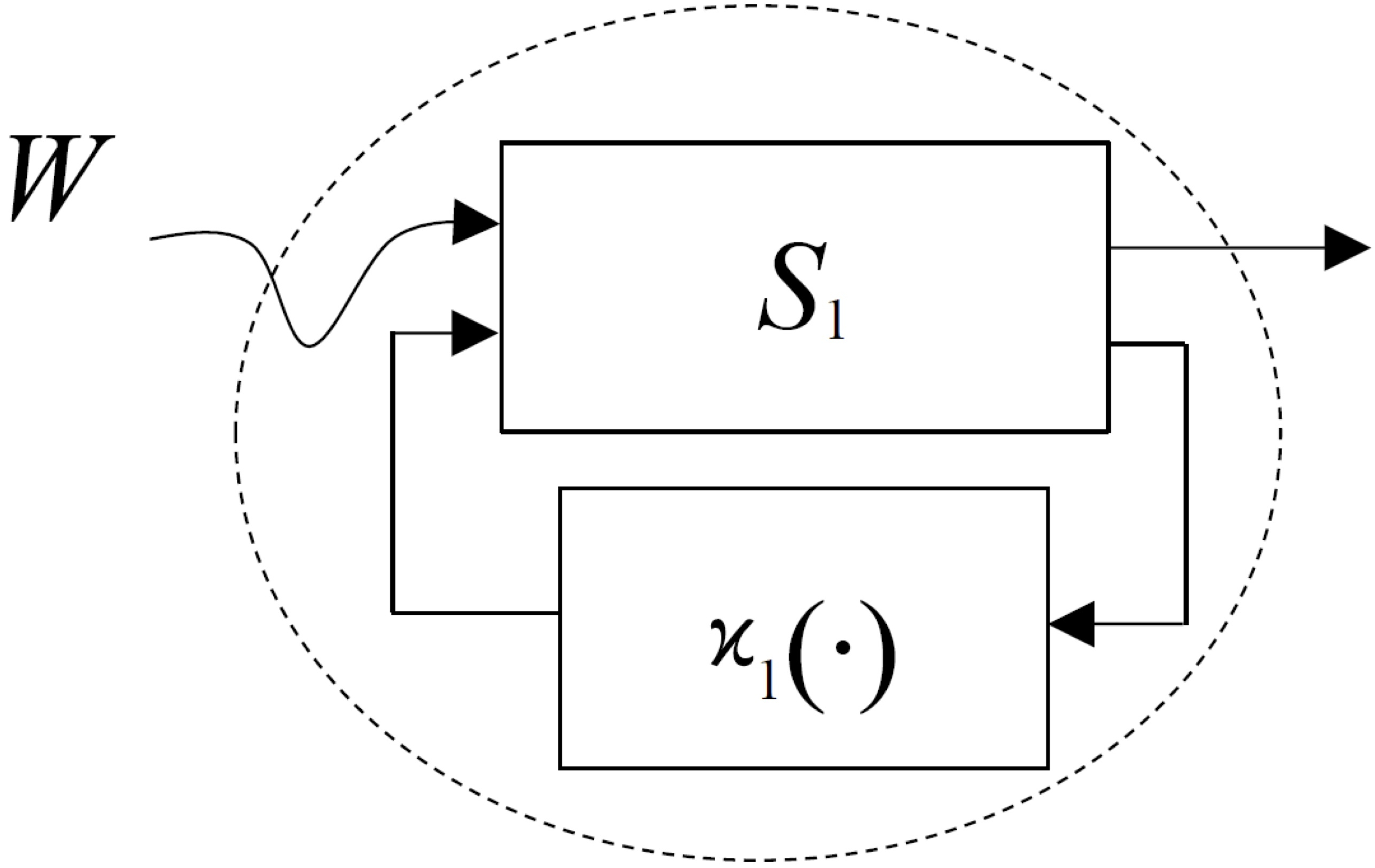}}
\subfloat{\includegraphics[width=46mm]{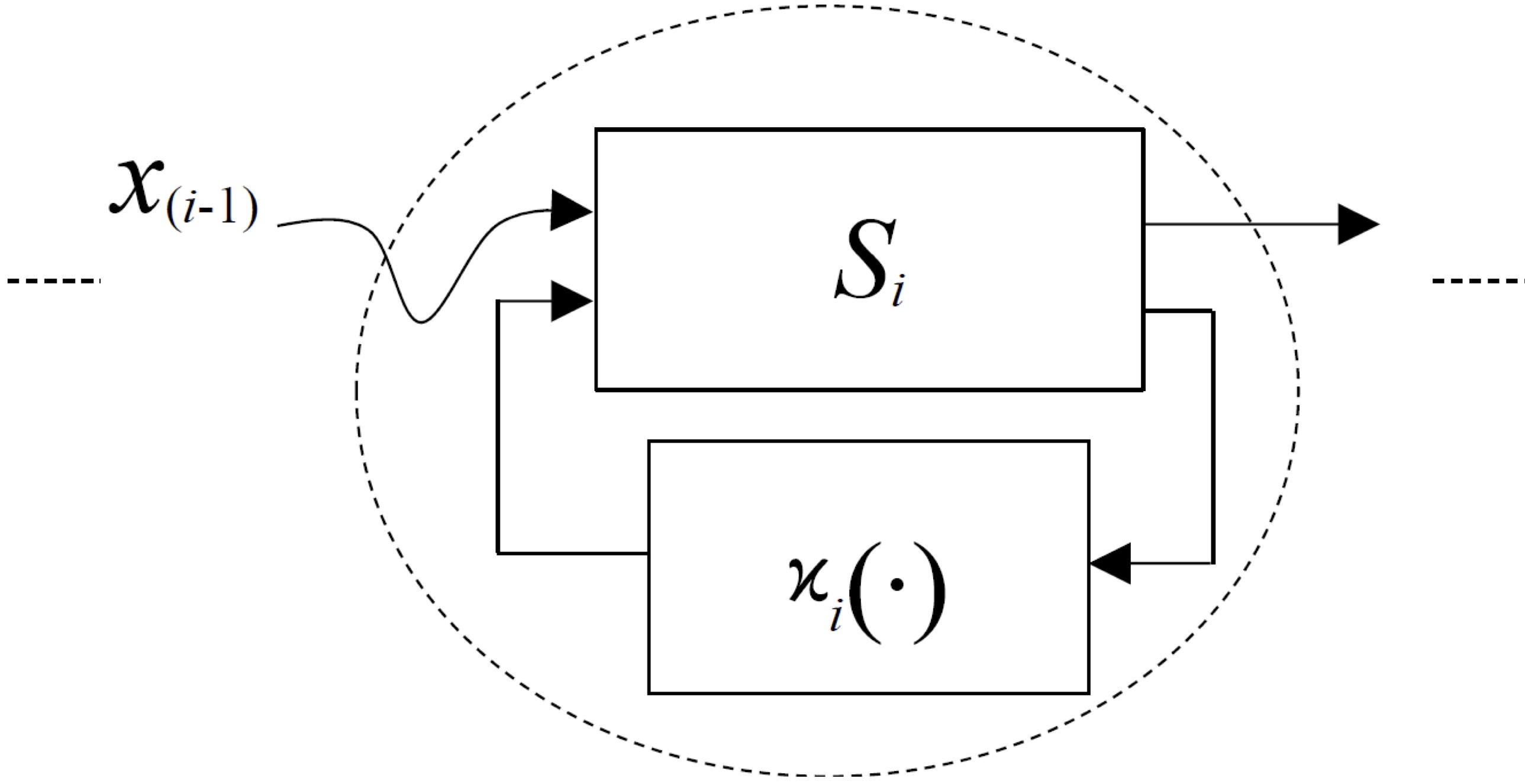}}
\subfloat{\includegraphics[width=37mm]{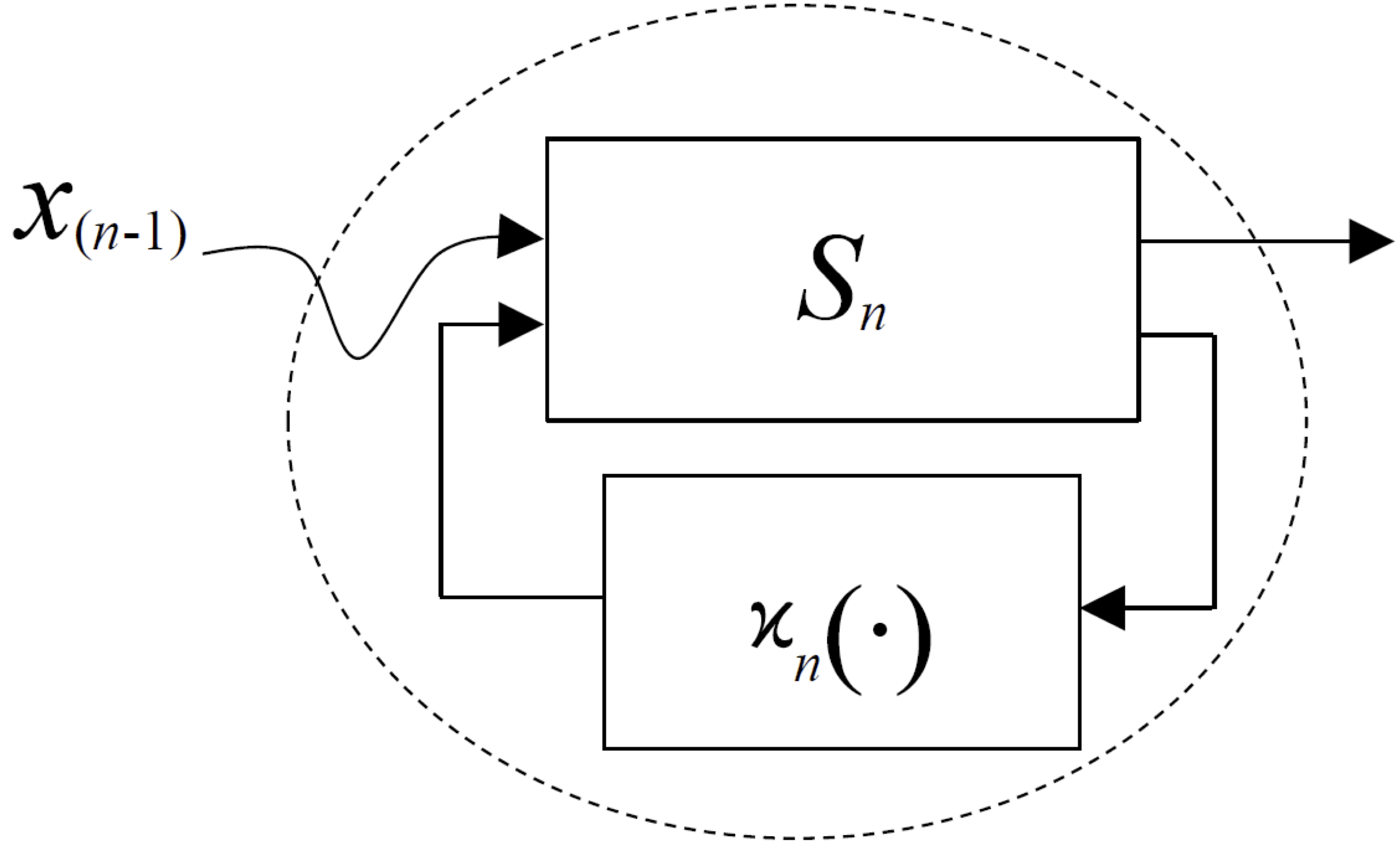}}\\
{$ \begin{array}{r@{\ }c@{\ }l}\\
\text{where} & \\
&S_1:  \, \, d x^{1}(t) = m_1\bigl(x^{1}(t), u_1(t)\bigr) dt + \sigma\bigl(x^{1}(t))dW(t), \\
&S_i: \,\, d x^{i}(t) = m_{i}\bigl(x^{1}(t), x^{2}(t), \ldots, x^{i}(t), u_{i}(t)\bigr) dt, ~ i = 2, 3, \ldots n,\\
     & \quad \quad u_j = \kappa_j(x^{j}), ~ j =1, 2,  \ldots n.
\end{array}$}
\caption{A chain of distributed control systems with random perturbations} \label{Fig-DCS}
\vspace{-3 mm}
\end{center}
\end{figure}

In what follows, we consider a particular class of admissible controls $u_{i}(\cdot) \in \mathcal{U}_{i}$, for $i = 1, 2 \ldots, n$, of the form $\kappa_{i}(x^{i}(t))$, $t \ge 0$, with a measurable map $\kappa_{i}$ from $\mathbb{R}^{d}$ to $\mathcal{U}_{i}$ and, thus, such a measurable map $\kappa_i$ is called a stationary Markov control. 

\begin{remark} \label{R1}
Note that the function $m_i$, with an admissible control $\kappa_i(x^{i})$, depends only on $x^{1}$, $x^{2}$, \ldots, $x^{i}$. For any $\ell \in \{2, \ldots, n\}$, we assume that the distributed control systems, which is formed by the first $\ell$ subsystems, satisfies an appropriate H\"{o}rmander condition (e.g., see \cite{Hor67}). Furthermore, the random perturbation has to pass through the second subsystem, the third subsystem, and so on to the $i$th-subsystem. Hence, such a chain of distributed control systems is described by an $n \times d$ dimensional diffusion process, which is degenerate in the sense that the infinitesimal generator associated with it is a degenerate parabolic equation (see also Remark~\ref{R3}). 
\end{remark}

Let $D_i \subset \mathbb{R}^{d}$, for $i =1, 2, \ldots, n$, be bounded open domains with smooth boundaries (i.e., $\partial D_i$ is a manifold of class $C^2$). Moreover, let $\Omega_{\ell}$ be the open sets that are given by
\begin{align*}
\Omega_{\ell} = D_1 \times D_2 \times \cdots \times D_{\ell}, \quad \ell = 2, 3, \ldots, n.
\end{align*}

Suppose that the distributed control systems in Equation~\eqref{Eq1} are composed with a set of admissible Markov controls $\kappa_{i}$, for $i =1, 2, \ldots, n$. Further, let $\tau^{\ell}=\tau^{{\ell}}(x^{1}, \ldots, x^{\ell})$, for $\ell \in \{2, 3, \ldots, n\}$, be the first exit-time for the diffusion process $x^{ \ell}(t)$ (which corresponds to the $\ell$th-subsystem) from the given domain $D_{\ell}$, i.e., 
\begin{align}
\tau^{\ell} = \inf \Bigl\{ t > 0 \, \bigl\vert \, x^{\ell}(t) \in \partial D_{\ell} \Bigr\}, \label{Eq2}
\end{align}
which also depends on the behavior of the solutions to the following (deterministic) distributed control systems
\begin{align}
\left.\begin{array}{l}
d \xi^{1}(t) = \hat{m}_1\bigl(\xi^{1}(t)\bigr) dt \\
d \xi^{j}(t) = \hat{m}_{j}\bigl(\xi^{1}(t), \xi^{2}(t), \ldots, \xi^{j}(t)\bigr) dt,  \,\,  t \ge 0, \,\, j = 2, \ldots, \ell
\end{array}\right\}, \label{Eq3}
\end{align}
where
\begin{align*}
\hat{m}_{i} \bigl(\xi^{1}(t), \ldots, \xi^{i}(t)\bigr) \triangleq m_{i}\bigl(\xi^{1}(t), \ldots, \xi^{i}(t), \kappa_{i}(\xi^{i}(t))\bigr),\,\, i = 1, 2, \ldots, \ell.
\end{align*} 

\begin{remark} \label{R2}
In this paper, we assume that the origin is a (unique) globally asymptotically stable equilibrium point for the above (deterministic) distributed control systems.
\end{remark}

Notice that the infinitesimal generator for the diffusion processes $\bigl(x^{1}(t), x^{2}(t), \ldots, x^{\ell}(t)\bigr)$ (when $u_i(t) = \kappa_{i}(x^{i}(t))$, $\forall t \ge 0$, for $i = 1, 2, \ldots, \ell$) is given by
\begin{eqnarray}
\mathcal{L}_{\kappa}^{\ell} \bigl(\cdot\bigr) \bigl(x^{1}, \ldots, x^{\ell}\bigr) = \sum\nolimits_{i=1}^{\ell} \bigl \langle \bigtriangledown(\cdot), \hat{m}_i \bigl(x^{1}, \ldots, x^{i}\bigr) \bigr\rangle + \frac{1}{2} \operatorname{tr}\Bigl \{\sigma(x^1)\sigma^T(x^1)\bigtriangledown^2(\cdot) \Bigr\} \label{Eq4}
\end{eqnarray}
with $a(x^{1})=\sigma(x^{1})\,\sigma^T(x^{1})$ and 
\begin{align*}
\hat{m}_{i}\bigl(x^{1}(t), \ldots, x^{i}(t)\bigr) = m_{i}\bigl(x^{1}(t), \ldots, x^{i}(t), \kappa_{i}(x^{i}(t))\bigr), \,\, i = 1, 2, \ldots, \ell.
\end{align*}

In this paper, we assume that the following statements hold for the chain of distributed control systems in Equation~\eqref{Eq1}.

\begin{assumption} \label{AS1} ~\\\vspace{-3mm}
\begin{enumerate} [(a)]
\item The infinitesimal generator in Equation~\eqref{Eq4} is hypoelliptic in $C^{\infty}(\Omega_{\ell})$, for each $\ell =2, 3, \ldots, n$ (cf. Remark~\ref{R3}).
\item $\bigl \langle \hat{m}_{\ell}\bigl(x^{1}, x^{2}, \ldots, y \bigr), \alpha(y) \bigr\rangle > 0$, for each $\ell = 2, 3, \ldots, n$, where $\alpha(y)$ is a unit outward normal vector to $\partial D_{\ell}$ at $y \in \partial D_{\ell}$.\footnote{We remark that
\begin{align*}
\mathbb{P}_{x_{\widehat{1,\ell}}} \Bigl\{ \bigl( x^{1}(\tau^{\ell}), x^{2} (\tau^{\ell}), \ldots, x^{\ell} (\tau^{\ell})\bigr) \in \Gamma_{\ell}, \,\, \tau^{\ell} < \infty \Bigr \} =1, \,\,
 \forall \bigl(x_0^{1}, x_0^{2}, \ldots, x_0^{\ell} \bigr) \in \Omega_{\ell},
\end{align*}
where $\Gamma_{\ell}$ denotes the set of points $\bigl(x^{1}, x^{2}, \ldots, y\bigr)$, with $y \in \partial D_{\ell}$, such that
\begin{align*}
\bigl \langle \hat{m}_{\ell}\bigl(x^{1}, x^{2}, \ldots, y \bigr), \alpha(y) \bigr\rangle > 0, \quad \ell = 2, 3, \ldots, n.
\end{align*}
Moreover, if $\tau^{\ell} \le T$, with fixed $T > 0$. Then, we have $\bigl( x^{1}(\tau^{\ell}), x^{2} (\tau^{\ell}), \ldots, x^{\ell} (\tau^{\ell})\bigr) \in \Gamma_{\ell}$ almost surely (see \cite[Section~7]{StrVa72}).}
\end{enumerate}
\end{assumption}

\begin{remark}  \label{R3}
Note that, from Assumptions~\ref{AS1}(a), each matrix $\Bigr(\frac{\partial {m_{\ell}}_i}{\partial x_j^{1}}\Bigl)_{ij}$, for $\ell = 2, 3, \ldots n$, has full rank $d$ everywhere in $\Omega_{\ell}$, due to the hypoellipticity of the infinitesimal generator in Equation~\eqref{Eq4}. In general, the hypoellipticity is related to a strong accessibility property of controllable nonlinear systems that are driven by white noise (e.g., see \cite{SusJu72} concerning the controllability of nonlinear systems, which is closely related to \cite{StrVa72} and \cite{IchKu74}; and see also \cite[Section~3]{Ell73}). That is, the hypoellipticity assumption implies that the diffusion process $x^{\ell}(t)$ has a transition probability density $p_{(t)}^{\ell}\bigl((x^{1}, \ldots, x^{\ell}), \otimes_{i=1}^{\ell} d\mu_i\bigr)$, which is $C^{\infty}$ on $\mathbb{R}^{2(d \times \ell)}$, and which satisfies the forward equation ${\mathcal{L}_{\kappa}^{\ell}}^{\ast} p_{(t)}^{\ell} = 0$ as $t \rightarrow \infty$, where ${\mathcal{L}_{\kappa}^{\ell}}^{\ast}$ is adjoint of the infinitesimal generator $\mathcal{L}_{\kappa}^{\ell}$.
\end{remark}

In what follows, we consider the following nondegenerate diffusion processes $(x^{1}(t), x^{\epsilon_2, 2}(t), \\ \ldots, x^{\epsilon_\ell, \ell}(t))$, for each $\ell = 2, 3, \ldots, n$, satisfying
\begin{align}
\left.\begin{array}{l}
d x^{1}(t) = \hat{m}_1\bigl(x^{1}(t)\bigr) dt +  \sigma\bigl(x^{1}(t))dW(t) \\
d x^{\epsilon_j, j}(t) = \hat{m}_{j}\bigl(x^{1}(t), x^{\epsilon_2, 2}(t), \ldots, x^{\epsilon_j, j}(t)\bigr) dt + \sqrt{\epsilon_j} dW_j(t) dt \\
\quad \quad\quad\quad\quad\quad\quad\quad \quad\quad\quad\quad\quad\quad  0< \epsilon_j \ll 1,  \,\,\,  t \ge 0, \,\,\, j = 2, \ldots, \ell
\end{array}\right\}, \label{Eq5}
\end{align}
 where
\begin{align*}
\hat{m}_{i}\bigl(x^{1}(t), \ldots, x^{\epsilon_i, i}(t)\bigr) = m_{i}\bigl(x^{1}(t), \ldots, x^{\epsilon_i, i}(t), \kappa_{i}(x^{\epsilon_i, i}(t))\bigr), \,\, i = 1, 2, \ldots, \ell 
\end{align*}
and with an initial condition
\begin{align*}
  \bigl(x_0^{1}, x_0^{\epsilon_2, 2}, \ldots, x_0^{\epsilon_\ell, \ell}\bigr) = \bigl(x_0^{1}, x_0^{2}, \ldots, x_0^{\ell}\bigr). 
\end{align*}
Furthermore, $W_j(\cdot)$ (with $W_j(0) = 0$), for $j = 2, \ldots, n$, are $d$-dimensional standard Wiener processes and independent to $W(\cdot)$.

The infinitesimal generator, which is associated with the nondegenrate diffusion processes in Equation~\eqref{Eq5}, is given by
\begin{align}
& \mathcal{L}_{\kappa}^{\epsilon,\ell} \bigl(\cdot\bigr) \bigl(x^{1}, x^{\epsilon_2, 2}, \ldots, x^{\epsilon_{\ell}, \ell}\bigr) = \sum\nolimits_{i=1}^{\ell} \bigl \langle \bigtriangledown(\cdot), \hat{m}_i \bigl(x^{1}, \ldots, x^{\epsilon_i, i}\bigr) \bigr\rangle \notag \\ 
   & \quad \quad \quad\quad\quad\quad \quad + \frac{1}{2} \operatorname{tr}\Bigl \{\sigma(x^1)\sigma^T(x^1)\bigtriangledown^2(\cdot) \Bigr\}  + \sum\nolimits_{j=2}^{\ell} \frac{\epsilon_j}{2} \operatorname{tr}\Bigl \{ I_{d \times d}\bigtriangledown^2(\cdot) \Bigr\}, \label{Eq6}
\end{align}
for each $\ell = 2, 3, \ldots, n$.

In Section~\ref{S2}, we present our main results -- where, in Subsection~\ref{S2(1)}, we establish a connection between the minimum exit rate with which the diffusion process $x^{\epsilon_{\ell}, \ell}$ (for the nondegenerate case) exits from the domain $D_{\ell}$ and the principal eigenvalue of the infinitesimal generator $\mathcal{L}_{\kappa}^{\epsilon,\ell}$, with zero boundary conditions on $\partial D_{\ell}$. In Subsection~\ref{S2(2)}, using the formalism from Subsection~\ref{S2(1)}, we derive a family of Hamilton-Jacobi-Bellman (HJB) equations for which we provide a verification theorem that shows the validity of the corresponding optimal control problems. Finally, in Subsection~\ref{S2(2)}, we provide an estimate on the attainable exit probability of the diffusion process $x^{\ell}(t)$, from the given domain $D_{\ell}$, with respect to a set of admissible optimal Markov controls $\kappa_i^{\ast}$, for $i = 1, 2, \ldots, \ell$.

Before concluding this section, it is worth mentioning that some interesting studies on estimating density functions for dynamical systems with random perturbations have been reported in literature (for example, see \cite{Hern81} or \cite{DelM10} in the context of estimating density functions for degenerate diffusions; see \cite{She91}, \cite{Day87} or \cite[Chapters~12 and 13]{Fre76} in the context of nondegenerate diffusions; and see \cite{Kif81} or \cite{BisB11} in the context of an asymptotic behavior for the equilibrium density).

\newpage
\section{Main Results} \label{S2}
\subsection{Minimum exit rates and principal eigenvalues} \label{S2(1)}
For a fixed $\ell \in \{2, 3, \ldots, n\}$, let us consider the following eigenvalue problem
\begin{align}
\left.\begin{array}{c}
  - \mathcal{L}_{\kappa}^{\epsilon,\ell} \psi_{\kappa}^{\ell} \bigl(x^{1}, x^{\epsilon_2, 2} \ldots, x^{\epsilon_{\ell}, \ell}\bigr) = \lambda_{\kappa}^{\epsilon,\ell} \psi_{\kappa}^{\ell}\bigl(x^{1}, x^{2} \ldots, x^{\ell}\bigr) \quad \text{in} \quad \Omega_{\ell} \quad \vspace{2mm} \\
  \quad \psi_{\kappa}^{\ell}\bigl(x^{1}, x^{\epsilon_2, 2} \ldots, x^{\epsilon_{\ell}, \ell}\bigr) = 0 \quad \text{on} \quad \partial \Omega_{\ell},  \label{Eq7}
  \end{array}\right\} 
\end{align}
where the infinitesimal generator $\mathcal{L}_{\kappa}^{\epsilon,\ell}$ is given by Equation~\eqref{Eq6}. 

In this subsection, using Theorems~1.1, 1.2 and 1.4 from \cite{QuaSi08}, we provide conditions for the existence of a unique principal eigenvalue $\lambda_{\kappa}^{\epsilon,\ell} > 0$ and an eigenfunction $\psi_{\kappa}^{\ell} \in W_{loc}^{2,p} \bigl(\Omega_{\ell}\bigr) \cap C\bigl(\bar{\Omega}_{\ell}\bigr)$ pairs for the eigenvalue problem in Equation~\eqref{Eq7}, with zero boundary conditions on $\partial \Omega_{\ell}$. Note that the principal eigenvalue $\lambda_{\kappa}^{\epsilon,\ell}$ is related to the minimum exit rate with which the diffusion process $x^{\epsilon_{\ell}, \ell}(t)$ exits from the domain $D_{\ell}$ -- when the distributed control systems in Equation~\eqref{Eq5} are composed with a set of admissible Markov controls $\kappa_i$, for $i = 1, 2, \dots, \ell$.

The following proposition establishes a connection between the minimum exit rate and with that of the principal eigenvalue for the infinitesimal generator $\mathcal{L}_{\kappa}^{\epsilon,\ell}$ in Equation~\eqref{Eq6}.

\begin{proposition} \label{P1}
Suppose that a set of admissible Markov controls $\kappa_i$, for $i=1,2, \ldots n$, is given. Then, the principal eigenvalue $\lambda_{\kappa}^{\epsilon,\ell}$ for the infinitesimal generator $\mathcal{L}_{\kappa}^{\epsilon, \ell}$, with zero boundary conditions on $\partial \Omega_{\ell}$, is given by 
\begin{align}
\lambda_{\kappa}^{\epsilon,\ell} = - \limsup_{t \rightarrow \infty} \frac{1} {t} \log \mathbb{P}_{x_{\widehat{1,\ell}}}^{\epsilon} \bigl\{\tau^{\ell, \epsilon} > t \bigr\},  \label{Eq8}
\end{align}
for each $\ell = 2, 3, \ldots n$, where $\tau^{\ell,\epsilon}$ is the exit-time for the diffusion process $x^{\epsilon_{\ell}, \ell}(t)$ (which corresponds to the $\ell$th-subsystem in the nondegenerate case) from the domain $D_{\ell}$, i.e.,
\begin{align*}
\tau^{\ell,\epsilon} = \inf \Bigl\{ t > 0 \, \bigl\vert \, x^{\epsilon_{\ell}, \ell}(t) \in \partial D_{\ell} \Bigr\},
\end{align*}
and the probability $\mathbb{P}_{x_{\widehat{1,\ell}}}^{\epsilon}\bigl\{\cdot\bigr\}$ in Equation~\eqref{Eq8} is further conditioned on the initial condition $\bigl(x_0^{1}, x_0^{\epsilon_2, 2}, \ldots, x_0^{\epsilon_{\ell}, \ell}\bigr) \in \Omega_{\ell}$ as well as on the admissible Markov controls $\kappa_i$, for $i=1,2, \ldots \ell$.
\end{proposition}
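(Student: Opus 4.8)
The plan is to read the survival probability $\mathbb{P}^{\epsilon}_{x_{\widehat{1,\ell}}}\{\tau^{\ell,\epsilon}>t\}$ as the action of the Dirichlet (killed) semigroup of $\mathcal{L}_{\kappa}^{\epsilon,\ell}$ on the constant function $\mathbf{1}$, and then to extract its exponential decay rate from the principal eigenpair supplied by \cite{QuaSi08}. Write $X_{t}=(x^{1}(t),x^{\epsilon_2,2}(t),\ldots,x^{\epsilon_{\ell},\ell}(t))$, abbreviate the initial condition as $x$, and set $\tau=\tau^{\ell,\epsilon}$. The first observation is that, since $\sigma\sigma^{T}\ge\lambda I$ and every $\epsilon_{j}>0$, the diffusion matrix of $X_{t}$ is block-diagonal and bounded below by $\min(\lambda,\epsilon_2,\ldots,\epsilon_{\ell})\,I$, so that $\mathcal{L}_{\kappa}^{\epsilon,\ell}$ is uniformly elliptic on the bounded $C^{2}$ domain $\Omega_{\ell}$. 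This is exactly the setting of Theorems~1.1, 1.2 and 1.4 of \cite{QuaSi08}, which yield a simple principal eigenvalue $\lambda_{\kappa}^{\epsilon,\ell}>0$ together with an eigenfunction $\psi_{\kappa}^{\ell}\in W_{loc}^{2,p}(\Omega_{\ell})\cap C(\bar{\Omega}_{\ell})$ satisfying $\psi_{\kappa}^{\ell}>0$ in $\Omega_{\ell}$ and $\psi_{\kappa}^{\ell}=0$ on $\partial\Omega_{\ell}$. Writing $T_{t}^{\ell}f(x)=\mathbb{E}_{x}[f(X_{t})\mathbf{1}_{\{t<\tau\}}]$ for the killed semigroup, one has $\mathbb{P}^{\epsilon}_{x}\{\tau>t\}=T_{t}^{\ell}\mathbf{1}(x)$.

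The easy half of \eqref{Eq8} comes from a martingale identity. Applying It\^o's formula (in its $W^{2,p}$ form) to $s\mapsto e^{\lambda_{\kappa}^{\epsilon,\ell}s}\psi_{\kappa}^{\ell}(X_{s})$ and using the eigenvalue equation $\mathcal{L}_{\kappa}^{\epsilon,\ell}\psi_{\kappa}^{\ell}=-\lambda_{\kappa}^{\epsilon,\ell}\psi_{\kappa}^{\ell}$ shows that $e^{\lambda_{\kappa}^{\epsilon,\ell}(s\wedge\tau)}\psi_{\kappa}^{\ell}(X_{s\wedge\tau})$ is a martingale; since $\psi_{\kappa}^{\ell}$ vanishes on $\partial\Omega_{\ell}$, optional stopping kills the boundary contribution and produces the exact identity $\psi_{\kappa}^{\ell}(x)=e^{\lambda_{\kappa}^{\epsilon,\ell}t}\,\mathbb{E}_{x}[\psi_{\kappa}^{\ell}(X_{t})\mathbf{1}_{\{t<\tau\}}]=e^{\lambda_{\kappa}^{\epsilon,\ell}t}\,T_{t}^{\ell}\psi_{\kappa}^{\ell}(x)$. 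With $M:=\max_{\bar{\Omega}_{\ell}}\psi_{\kappa}^{\ell}<\infty$ this gives $\psi_{\kappa}^{\ell}(x)\le M\,e^{\lambda_{\kappa}^{\epsilon,\ell}t}\,\mathbb{P}^{\epsilon}_{x}\{\tau>t\}$, hence $\mathbb{P}^{\epsilon}_{x}\{\tau>t\}\ge (\psi_{\kappa}^{\ell}(x)/M)\,e^{-\lambda_{\kappa}^{\epsilon,\ell}t}$. Taking $\tfrac1t\log$ yields $\liminf_{t\to\infty}\tfrac1t\log\mathbb{P}^{\epsilon}_{x}\{\tau>t\}\ge -\lambda_{\kappa}^{\epsilon,\ell}$; a fortiori $\limsup_{t\to\infty}\tfrac1t\log\mathbb{P}^{\epsilon}_{x}\{\tau>t\}\ge -\lambda_{\kappa}^{\epsilon,\ell}$, that is $-\limsup_{t\to\infty}\tfrac1t\log\mathbb{P}^{\epsilon}_{x}\{\tau>t\}\le\lambda_{\kappa}^{\epsilon,\ell}$.

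For the reverse inequality I would use that $\lambda_{\kappa}^{\epsilon,\ell}$ is the bottom of the spectrum of $-\mathcal{L}_{\kappa}^{\epsilon,\ell}$. Uniform ellipticity on a bounded smooth domain makes $T_{t}^{\ell}$ a compact, strongly positive operator, so by the Krein--Rutman theorem its spectral radius equals $e^{-\lambda_{\kappa}^{\epsilon,\ell}t}$ with a spectral gap. Equivalently, letting $\psi^{\ast}>0$ be the principal eigenfunction of the formal adjoint $(\mathcal{L}_{\kappa}^{\epsilon,\ell})^{\ast}$ (attached to the same $\lambda_{\kappa}^{\epsilon,\ell}$), duality gives the exact averaged identity $\int_{\Omega_{\ell}}\psi^{\ast}(y)\,\mathbb{P}^{\epsilon}_{y}\{\tau>t\}\,dy=e^{-\lambda_{\kappa}^{\epsilon,\ell}t}\int_{\Omega_{\ell}}\psi^{\ast}$, and the strict positivity of both $\psi_{\kappa}^{\ell}$ and $\psi^{\ast}$ ensures that the projection of $\mathbf{1}$ onto the principal eigenspace does not vanish. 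Upgrading this averaged decay to a pointwise bound $\mathbb{P}^{\epsilon}_{x}\{\tau>t\}\le C_{x}\,e^{-\lambda_{\kappa}^{\epsilon,\ell}t}$ via interior parabolic (Harnack) regularity then yields $\limsup_{t\to\infty}\tfrac1t\log\mathbb{P}^{\epsilon}_{x}\{\tau>t\}\le-\lambda_{\kappa}^{\epsilon,\ell}$, i.e. $-\limsup_{t\to\infty}\tfrac1t\log\mathbb{P}^{\epsilon}_{x}\{\tau>t\}\ge\lambda_{\kappa}^{\epsilon,\ell}$. Together with the previous paragraph this gives \eqref{Eq8}.

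Two steps carry the technical weight. The first is justifying It\^o's formula for the merely $W^{2,p}_{loc}$ eigenfunction: this is available through Krylov's extension of It\^o's formula, which applies precisely because $X_{t}$ is nondegenerate, combined with a localization argument to reach $t\wedge\tau$ since $\psi_{\kappa}^{\ell}$ need not be $C^{2}$ up to $\partial\Omega_{\ell}$. The genuine obstacle, however, is the matching upper bound: one must rule out any faster-than-$\lambda_{\kappa}^{\epsilon,\ell}$ decay, which amounts to showing that the leading eigenmode genuinely contributes to the survival probability. This is where the strict positivity of the principal eigenfunctions and the spectral gap (compactness of the killed semigroup) are indispensable, and where controlling the mass of $X_{t}$ near $\partial\Omega_{\ell}$ --- the region in which $\psi_{\kappa}^{\ell}$ degenerates --- requires the boundary regularity afforded by the $C^{2}$ hypothesis on $\partial D_{\ell}$.
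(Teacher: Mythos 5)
Your first half is exactly the paper's argument: Krylov's extension of It\^o's formula for the $W_{loc}^{2,p}$ eigenfunction, localization on subdomains exhausting $D_{\ell}$ (your ``localization to reach $t\wedge\tau$''), optional sampling, and the sup-norm bound on $\psi_{\kappa}^{\ell}$, giving $-\limsup_{t\to\infty}\tfrac1t\log\mathbb{P}^{\epsilon}_{x}\{\tau^{\ell,\epsilon}>t\}\le\lambda_{\kappa}^{\epsilon,\ell}$. For the reverse inequality you take a genuinely different route. The paper never touches the adjoint operator or the spectral theory of the killed semigroup: it repeats the same martingale identity on a slightly \emph{enlarged} domain $B_k\supset D_{\ell}\cup\partial D_{\ell}$, where the principal eigenfunction $\psi_{\kappa,B_k}^{\ell}$ of $\mathcal{L}_{\kappa}^{\epsilon,\ell}$ on $B_k$ is bounded below by a positive constant on $\bar{D}_{\ell}$ (so there is no boundary degeneration to fight), deduces $\lambda_{\kappa,B_k}^{\epsilon,\ell}\le-\limsup_{t\to\infty}\tfrac1t\log\mathbb{P}^{\epsilon}_{x}\{\tau^{\ell,\epsilon}>t\}$, and then shrinks $B_k$ to $\bar{D}_{\ell}$, invoking Proposition~4.10 of \cite{QuaSi08} (continuity of the principal eigenvalue under domain perturbation) to get $\lambda_{\kappa,B_k}^{\epsilon,\ell}\to\lambda_{\kappa}^{\epsilon,\ell}$. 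Your route instead relies on compactness of the killed semigroup, Krein--Rutman, the adjoint eigenfunction $\psi^{\ast}$, the duality identity, and an averaged-to-pointwise upgrade. That is a legitimate classical path, but it is heavier, and its crux is precisely the step you only gesture at: the upgrade $\mathbb{P}^{\epsilon}_{x}\{\tau>t\}\le C_{x}e^{-\lambda_{\kappa}^{\epsilon,\ell}t}$ is not an \emph{interior} Harnack statement but a \emph{boundary} one --- one needs the Dirichlet kernel $p_{1}^{D}(x,\cdot)$ to be dominated by $\psi^{\ast}$ near $\partial\Omega_{\ell}$ (an intrinsic-ultracontractivity-type bound) --- and, since the Markov controls $\kappa_{i}$ are merely measurable, the ``formal adjoint'' has no classical coefficients, so $\psi^{\ast}$ is only available as the Krein--Rutman positive eigenvector of the dual semigroup (a measure), not as a classical eigenfunction. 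All of this can be carried through for a uniformly elliptic operator on a bounded $C^{2}$ domain, so your argument is completable; what the paper's domain-enlargement trick buys is that none of it is needed --- both inequalities follow from the same elementary identity plus one cited eigenvalue-continuity result, which is well matched to the low regularity of the coefficients here.
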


\begin{proof}
Fix $\ell \in \{ 2, 3, \ldots, n\}$, and for $\delta > 0$, let $D_{\ell}^{\delta} \subset D_{\ell}$ (with $D_{\ell}^{\delta} \cup \partial D_{\ell}^{\delta} \subset D_{\ell}$) be a family of bounded domains with smooth boundaries, increasing to $D_{\ell}$ as $\delta \rightarrow 0$. Let 
\begin{align*}
\tau_{\delta}^{\ell,\epsilon} = \inf \Bigl\{ t > 0 \, \bigl\vert \, x^{\epsilon_{\ell}, \ell}(t) \in \partial D_{\ell}^{\delta} \Bigr\}.
\end{align*}
Then, applying Krylov's extension of the It\^{o}'s formula valid for continuous functions from $W_{loc}^{2,p} \bigl(\Omega_{\ell}\bigr)$, with $p \ge 2$ (e.g., see \cite[Chapter~II]{Bor89}) and the optional sampling theorem
\begin{align*}
 \psi_{\kappa}^{\ell} \Bigl(\hat{x}^{1}, \hat{x}^{2}, \ldots, \hat{x}^{\ell}\Bigr) &= \mathbb{E}_{x_{\widehat{1,\ell}}}^{\epsilon} \Biggl\{\exp\bigl( \lambda_{\kappa}^{\epsilon,\ell} (t \wedge \tau_{\delta}^{\ell,\epsilon}) \bigr) \notag \\
  & \quad\quad\quad  \times \psi_{\kappa}^{\ell} \Bigl(x^{1}(t \wedge \tau_{\delta}^{\ell,\epsilon}), x^{\epsilon_2, 2}(t \wedge \tau_{\delta}^{\ell,\epsilon}), \ldots, x^{\epsilon_{\ell}, \ell}(t \wedge \tau_{\delta}^{\ell,\epsilon})\Bigr) \Biggr\}
\end{align*}
for some $\bigl(\hat{x}^{1}, \hat{x}^{2}, \ldots, \hat{x}^{\ell}\bigr) \in \Omega_{\ell}$.
Letting $\delta \rightarrow 0$, then we have $\tau_{\delta}^{\ell,\epsilon} \rightarrow \tau^{\ell,\epsilon}$, almost surely, and
\begin{align*}
 \psi_{\kappa}^{\ell} \Bigl(\hat{x}^{1}, \hat{x}^{2}, \ldots, \hat{x}^{\ell}\Bigr) 
 &= \mathbb{E}_{x_{\widehat{1,\ell}}}^{\epsilon} \Biggl\{\exp\bigl( \lambda_{\kappa}^{\epsilon,\ell} (t \wedge \tau_{\delta}^{\ell,\epsilon}) \bigr) \\
 & \quad\quad\quad  \times \psi_{\kappa}^{\ell} \Bigl(x^{1}(t \wedge \tau_{\delta}^{\ell,\epsilon}), x^{\epsilon_2, 2}(t \wedge \tau_{\delta}^{\ell,\epsilon}), \ldots, x^{\epsilon_{\ell}, \ell}(t \wedge \tau_{\delta}^{\ell,\epsilon})\Bigr) \Biggr\} \\
&= \mathbb{E}_{x_{\widehat{1,\ell}}}^{\epsilon} \Biggl\{\exp\bigl( \lambda_{\kappa}^{\epsilon,\ell} t \bigr) \,\psi_{\kappa}^{\ell} \Bigl(x^{1}(t), x^{\epsilon_2, 2}(t), \ldots, x^{\epsilon_{\ell}, \ell}(t)\Bigr) \mathbf{1}\Bigl\{\tau_{\delta}^{\ell,\epsilon} > t\Bigr\} \Biggr\} \\
  & \le  \Bigl\Vert \psi_{\kappa}^{\ell} \Bigl(x^{1}(t), x^{\epsilon_2, 2}(t), \ldots, x^{\epsilon_{\ell}, \ell}(t)\Bigr) \Bigr\Vert_{\infty}  \exp\bigl( \lambda_{\kappa}^{\epsilon,\ell}\, t\bigr)  \mathbb{P}_{x_{\widehat{1,\ell}}}^{\epsilon}\Bigl\{\tau_{\delta}^{\ell,\epsilon} > t\Bigr\}
\end{align*}
If we take the logarithm and divide both sides by $t$, then, further let $t \rightarrow \infty$, we have
\begin{align}
\lambda_{\kappa}^{\epsilon,\ell} & \ge - \liminf_{t \rightarrow \infty} \frac{1} {t} \log \mathbb{P}_{x_{\widehat{1,\ell}}}^{\epsilon} \bigl\{\tau^{\ell,\epsilon} > t \bigr\} \ge - \limsup_{t \rightarrow \infty} \frac{1} {t} \log \mathbb{P}_{x_{\widehat{1,\ell}}}^{\epsilon} \bigl\{\tau^{\ell,\epsilon} > t \bigr\}.  \label{Eq9}
\end{align}
On the other hand, let $B_k \supset D_{\ell} \cup \partial D_{\ell}$ be an open domain with smooth boundary and let $\tau_B^{\ell,\epsilon}$ be the exit-time for the diffusion process $x^{\epsilon_{\ell}, \ell}(t)$ from the domain $B_k$. Further, let $ \psi_{\kappa, B_k}^{\ell} $ and $\lambda_{\kappa, B_k}^{\epsilon,\ell}$ be the principal eigenfunction-eigenvalue pairs for the eigenvalue problem of $\mathcal{L}_{\kappa}^{\epsilon,\ell}$ on $B_k$, with $\psi_{\kappa, B_k}^{\ell} \Bigl(\hat{x}^{1}, \hat{x}^2, \ldots, \hat{x}^{\ell}(t)\Bigr) = 1$, for $\bigl(\hat{x}^{1}, \hat{x}^2, \ldots, \hat{x}^{\ell}\bigr) \in D_{\ell}$.

Then, we have the following
\begin{align*}
& \psi_{\kappa, B_k}^{\ell} \Bigl(\hat{x}^{1}, \hat{x}^2, \ldots, \hat{x}^{\ell}\Bigr) \\
& \quad\quad\quad\quad = \mathbb{E}_{x_{\widehat{1,\ell}}}^{\epsilon} \Biggl\{\exp\bigl( \lambda_{\kappa, B_k}^{\epsilon,\ell} t \bigr) \psi_{\kappa, B_k}^{\ell} \Bigl(x^{1}(t), x^{\epsilon, 2}(t), \ldots, x^{\epsilon_{\ell}, \ell}(t)\Bigr) \mathbf{1}\Bigl\{\tau_B^{\ell,\epsilon} > t\Bigr\} \Biggr\} \\
&\quad\quad\quad\quad \ge \mathbb{E}_{x_{\widehat{1,\ell}}}^{\epsilon} \Biggl\{\exp\bigl( \lambda_{\kappa, B_k}^{\epsilon,\ell} t \bigr) \,\psi_{\kappa, B_k}^{\ell} \Bigl(x^{1}(t), x^{\epsilon_2, 2}(t), \ldots, x^{\epsilon_{\ell}, \ell}(t)\Bigr) \mathbf{1}\Bigl\{\tau_B^{\ell,\epsilon} > t\Bigr\} \Biggr\} \\
  &\quad\quad\quad\quad \ge  \inf_{\bigl(x^{1}, x^{\epsilon_2, 2}, \ldots, x^{\epsilon_{\ell}, \ell}(t)\bigr) \in D_{\ell}}\Bigl\vert \psi_{\kappa, B_k}^{\ell} \Bigl(x^{1}(t), x^{\epsilon, 2}(t), \ldots, x^{\epsilon_{\ell}, \ell}(t)\Bigr) \Bigr\vert   \\
  & \quad\quad\quad\quad\quad\quad\quad\quad \times  \exp\bigl( \lambda_{\kappa, B_k}^{\epsilon,\ell}\, t\bigr)  \mathbb{P}_{x_{\widehat{1,\ell}}}^{\epsilon}\Bigl\{\tau_B^{\ell,\epsilon} > t\Bigr\}.
\end{align*}
Hence, from Equation~\eqref{Eq9}, we have the following 
\begin{align*}
\lambda_{\kappa, B_k}^{\epsilon,\ell} & \le - \limsup_{t \rightarrow \infty} \frac{1} {t} \log \mathbb{P}_{x_{\widehat{1,\ell}}}^{\epsilon} \bigl\{\tau^{\ell,\epsilon} > t \bigr\} \le - \liminf_{t \rightarrow \infty} \frac{1} {t} \log \mathbb{P}_{x_{\widehat{1,\ell}}}^{\epsilon} \bigl\{\tau^{\ell,\epsilon} > t \bigr\}.
\end{align*}
Then, using Proposition~4.10 of \cite{QuaSi08}, we have $\lambda_{\kappa, B_k}^{\epsilon,\ell} \rightarrow \lambda_{\kappa}^{\epsilon,\ell}$ as $B_k \rightarrow D_{\ell} \cup \partial D_{\ell}$. This completes the proof of Proposition~\ref{P1}.
\end{proof}

\subsection{Connection with optimal control problems} \label{S2(2)}
For $\ell = 2, 3, \ldots, n$, define the following HJB equations
\begin{align}
& \mathcal{L}^{\epsilon,\ell} \bigl(\cdot\bigr) \bigl(x^{1}, \ldots, x^{\ell}, u_{\ell}\bigr) = \sum\nolimits_{i=1}^{\ell-1} \Bigl \langle \bigtriangledown(\cdot), \hat{m}_{i\setminus\ell} \bigl(x^{1}, \ldots, x^{i}\bigr) \Bigr\rangle + \Bigl \langle \bigtriangledown(\cdot), m_{\ell}\bigl(x^{1}, \ldots, x^{\ell}, u_{\ell}\bigr) \Bigr\rangle \notag \\ 
& \quad \quad \quad \quad \quad + \frac{1}{2} \operatorname{tr}\Bigl \{\sigma(x^1)\sigma^T(x^1)\bigtriangledown^2(\cdot) \Bigr\} + \sum\nolimits_{j=2}^{\ell} \frac{\epsilon_j}{2} \operatorname{tr}\Bigl \{ I_{d \times d}\bigtriangledown^2(\cdot) \Bigr\}, \label{Eq10}
\end{align}
where
\begin{align*}
\hat{m}_{i\setminus\ell}\bigl(x^{1}(t), \ldots, x^{i}(t)\bigr) = m_{i}\bigl(x^{1}(t), \ldots, x^{i}(t), u_{i}^{\ast}(t)\bigr), \,\, i = 1, 2, \ldots, \ell-1.
\end{align*}
Note that we can associate the above HJB equations with the following family of optimal control problems
\begin{align}
\max_{u_{\ell} \in \mathcal{U}_{\ell}} \Biggl \{ \mathcal{L}^{\epsilon,\ell} \psi^{\ell} \bigl(x^{1}, \ldots, x^{\ell}, u_{\ell}\bigr) + \lambda^{\epsilon,\ell} \psi^{\ell} \bigl(x^{1}, \ldots, x^{\ell}\bigr) \Biggr\},  \label{Eq11}
\end{align}
for $\ell = 2, 3, \ldots, n$.

\begin{proposition} \label{P2}
There exist a unique $\lambda_{\ast}^{\epsilon,\ell} > 0$ (which is the minimum exit rate corresponding to the $\ell$th-subsystem) and $\psi_{\ast}^{\ell} \in C^2(\Omega_{\ell}) \cap C(\bar{\Omega}_{\ell})$, with $\psi_{\ast}^{\ell} > 0$ on $\Omega_{\ell}$, that satisfy the optimal control problem in Equation~\eqref{Eq11}.

Moreover, the admissible Markov control $\kappa_{\ell}^{\ast}$ is optimal if and only if $\kappa_i^{\ast}$ is a measurable selector for 
\begin{align}
\argmax \Bigl\{\mathcal{L}^{\epsilon, i} \psi_{\ast}^{i} \bigl(x^{1}, \ldots, x^{\epsilon_i, i}, \,\cdot \,\bigr) \Bigr\}, \,\, \bigl(x^{1}, \ldots, x^{\epsilon_i, i}\bigr) \in D_1 \times \cdots \times D_i, \label{Eq12}
\end{align}
for $i = 1, 2, \dots, \ell-1$.
\end{proposition}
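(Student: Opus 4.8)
The plan is to read Equation~\eqref{Eq11} as the Dirichlet principal-eigenvalue problem for the fully nonlinear (Bellman-type) operator obtained by carrying out the maximization over $u_\ell$, and then to run a verification argument that ties the resulting eigenpair back to the linear problem of Subsection~\ref{S2(1)} and to the probabilistic characterization in Proposition~\ref{P1}. The sign bookkeeping will be that maximizing the generator over the control minimizes the principal eigenvalue, so the optimal control is the one attaining the \emph{smallest} exit rate, consistent with the claim that $\lambda_*^{\epsilon,\ell}$ is the minimum exit rate for the $\ell$th subsystem.

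First I would record the structural fact that makes the argument work: on $\Omega_\ell = D_1 \times \cdots \times D_\ell$ the second-order part of $\mathcal{L}^{\epsilon,\ell}$ in Equation~\eqref{Eq10} has block-diagonal diffusion matrix $\operatorname{diag}\bigl(\sigma(x^1)\sigma^T(x^1), \epsilon_2 I_{d\times d}, \ldots, \epsilon_\ell I_{d\times d}\bigr)$, which is uniformly positive definite because $\sigma\sigma^T \ge \lambda I_{d\times d}$ and each $\epsilon_j > 0$. Hence, unlike the degenerate generator~\eqref{Eq4}, the regularized operator is uniformly elliptic on $\Omega_\ell$; this is precisely the role of the nondegenerate approximation~\eqref{Eq5}. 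Setting $H[\phi] := \max_{u_\ell \in \mathcal{U}_\ell}\{\mathcal{L}^{\epsilon,\ell}\phi(x^1,\ldots,x^\ell,u_\ell)\}$ and using that the $m_\ell$ are uniformly Lipschitz with bounded first derivatives, $H$ is a proper, convex, uniformly elliptic operator whose first-order coefficients are bounded and whose dependence on $(\nabla\phi,\nabla^2\phi)$ is Lipschitz. I would then invoke Theorems~1.1, 1.2 and 1.4 of \cite{QuaSi08}, exactly as in Subsection~\ref{S2(1)} but now for the nonlinear $H$, to obtain a unique $\lambda_*^{\epsilon,\ell} > 0$ and a positive principal eigenfunction $\psi_*^\ell \in W^{2,p}_{loc}(\Omega_\ell)\cap C(\bar\Omega_\ell)$ solving $H[\psi_*^\ell] + \lambda_*^{\epsilon,\ell}\psi_*^\ell = 0$ in $\Omega_\ell$ with $\psi_*^\ell = 0$ on $\partial\Omega_\ell$; interior $L^p$/Schauder estimates then upgrade $\psi_*^\ell$ to $C^2(\Omega_\ell)$, settling the first assertion.

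For the verification statement I would establish the two implications separately using the Berestycki--Nirenberg--Varadhan characterization $\lambda_\kappa^{\epsilon,\ell} = \sup\{\lambda : \exists\, \phi > 0 \text{ with } \mathcal{L}_\kappa^{\epsilon,\ell}\phi + \lambda\phi \le 0 \text{ in } \Omega_\ell\}$, also available from \cite{QuaSi08}. If $\kappa_\ell^*$ is a measurable selector of the $\argmax$ in Equation~\eqref{Eq12}, then substituting $u_\ell = \kappa_\ell^*(x^\ell)$ turns $\mathcal{L}_{\kappa}^{\epsilon,\ell}\psi_*^\ell \le H[\psi_*^\ell] = -\lambda_*^{\epsilon,\ell}\psi_*^\ell$ into an equality, so $(\lambda_*^{\epsilon,\ell},\psi_*^\ell)$ is the principal eigenpair of the \emph{linear} operator $\mathcal{L}_{\kappa^*}^{\epsilon,\ell}$; the uniqueness of Subsection~\ref{S2(1)} gives $\lambda_{\kappa^*}^{\epsilon,\ell} = \lambda_*^{\epsilon,\ell}$, while for every competing control $\psi_*^\ell$ is a positive supersolution and the variational formula yields $\lambda_\kappa^{\epsilon,\ell} \ge \lambda_*^{\epsilon,\ell}$, so by Proposition~\ref{P1} the control $\kappa_\ell^*$ minimizes the exit rate and is optimal. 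Conversely, if $\kappa_\ell^*$ is optimal, its linear eigenfunction satisfies $\mathcal{L}_{\kappa^*}^{\epsilon,\ell}\psi_{\kappa^*}^\ell + \lambda_*^{\epsilon,\ell}\psi_{\kappa^*}^\ell = 0 \le H[\psi_{\kappa^*}^\ell] + \lambda_*^{\epsilon,\ell}\psi_{\kappa^*}^\ell$, i.e. it is a positive subsolution of the nonlinear eigenvalue equation at level $\lambda_*^{\epsilon,\ell}$; simplicity of the principal eigenfunction and a strong comparison principle force $\psi_{\kappa^*}^\ell = c\,\psi_*^\ell$, whence $\mathcal{L}_{\kappa^*}^{\epsilon,\ell}\psi_*^\ell = H[\psi_*^\ell]$ almost everywhere, which is exactly the assertion that $\kappa_\ell^*$ selects the $\argmax$ in Equation~\eqref{Eq12}. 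Existence of a \emph{measurable} selector follows from the Kuratowski--Ryll-Nardzewski theorem, since $u_\ell \mapsto \langle \nabla\psi_*^\ell, m_\ell(x^1,\ldots,x^\ell,u_\ell)\rangle$ is continuous and $\mathcal{U}_\ell \subset \mathbb{R}^{r_\ell}$ is measurable.

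I expect the main obstacle to be the first part, namely verifying that the hypotheses of \cite{QuaSi08} genuinely apply to $H$ uniformly in the control: one must confirm the structure and ellipticity conditions, check that the maximization over $\mathcal{U}_\ell$ preserves the required convexity and Lipschitz bounds, and address that $\partial\Omega_\ell$ is only Lipschitz at the edges of the product $D_1\times\cdots\times D_\ell$ rather than $C^2$. The second delicate point is the identification $\psi_{\kappa^*}^\ell = c\,\psi_*^\ell$ in the converse direction, which rests on simplicity of the principal eigenvalue together with a strong maximum principle up to the boundary for the regularized uniformly elliptic problem.
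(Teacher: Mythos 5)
Your proposal is correct in substance and reaches the same conclusions as the paper, but it takes a genuinely different route in the verification step. The existence/uniqueness part is essentially the same: both you and the authors rest on Theorems~1.1, 1.2 and 1.4 of \cite{QuaSi08}, though you are more explicit in applying them to the convex Bellman operator $H[\phi]=\max_{u_\ell\in\mathcal{U}_\ell}\mathcal{L}^{\epsilon,\ell}\phi$, whereas the paper tersely says the claim ``follows from Equation~\eqref{Eq7}'' (the linear eigenvalue problem); your reading is arguably what the statement about \eqref{Eq11} actually requires. The divergence is in proving optimality of the maximizing selector. The paper argues probabilistically, mirroring its proof of Proposition~\ref{P1}: for a competing admissible control it writes the stochastic representation of a principal eigenfunction on an enlarged domain $Q\supset D_\ell\cup\partial D_\ell$, deduces $\hat{\lambda}^{\epsilon,\ell}\le-\limsup_{t\rightarrow\infty}\tfrac{1}{t}\log\mathbb{P}^{\epsilon}_{x_{\widehat{1,\ell}}}\bigl\{\tau^{\ell,\epsilon}>t\bigr\}$, and then shrinks $Q$ to $D_\ell$ via Proposition~4.10 of \cite{QuaSi08} to conclude that $\lambda_{\kappa^{\ast}}^{\epsilon,\ell}$ is a lower bound on every attainable exit rate. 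You stay entirely on the PDE side: the sup-characterization of the principal eigenvalue (Berestycki--Nirenberg--Varadhan, also available in \cite{QuaSi08}) converts the pointwise inequality $\mathcal{L}_{\kappa}^{\epsilon,\ell}\psi_{\ast}^{\ell}\le H[\psi_{\ast}^{\ell}]=-\lambda_{\ast}^{\epsilon,\ell}\psi_{\ast}^{\ell}$ directly into $\lambda_{\kappa}^{\epsilon,\ell}\ge\lambda_{\ast}^{\epsilon,\ell}$, and Proposition~\ref{P1} is invoked only at the end to translate eigenvalues into exit rates. Your route is shorter and avoids both the stochastic representation and the delicate domain-perturbation limit; what the paper's route buys is that its comparison is (at least formally) run against arbitrary admissible controls $u_i$ rather than only Markov policies, though since Proposition~\ref{P1} itself is stated for Markov controls, nothing in the proposition as stated is lost by your restriction. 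The converse direction is essentially identical in both proofs: optimality forces the linear eigenvalue to equal $\lambda_{\ast}^{\epsilon,\ell}$, simplicity of the principal eigenfunction (Theorem~1.4(a) of \cite{QuaSi08}) gives $\psi_{\kappa^{\ast}}^{\ell}=c\,\psi_{\ast}^{\ell}$, and hence the optimal control must attain the $\argmax$ in \eqref{Eq12}. The two caveats you flag --- the merely Lipschitz corners of the product domain $\Omega_{\ell}$ vis-\`{a}-vis the boundary-regularity hypotheses of \cite{QuaSi08}, and measurable selection --- are genuine, but they afflict the paper's proof equally and are left unaddressed there as well.
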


\begin{proof}
The first claim for $\psi_{\ast}^{\ell} \in W_{loc}^{2,p} \bigl(\Omega_{\ell}\bigr) \cap C\bigl(\bar{\Omega}_{\ell}\bigr)$, with $p>2$, follows from Equation~\eqref{Eq7} (cf. \cite[Theorems~1.1, 1.2 and 1.4]{QuaSi08}). Note that if $\kappa_i^{\ast}$, for $i=1,2, \ldots, \ell$, are measurable selectors of $\argmax \Bigl\{\mathcal{L}^{\epsilon, i} \psi_{\ast}^{i} \bigl(x^{1}, \ldots, x^{\epsilon_i, i}, \,\cdot \,\bigr) \Bigr\}$, with $\bigl(x^{1}, \ldots, x^{\epsilon_i, i}\bigr) \in D_1 \times \cdots \times D_i$. Then, by the uniqueness claim for eigenvalue problem in Equation~\eqref{Eq7}, we have
\begin{align*}
\lambda_{\kappa^{\ast}}^{\epsilon,\ell} = - \limsup_{t \rightarrow \infty} \frac{1} {t} \log \mathbb{P}_{x_{\widehat{1,\ell}}}^{\epsilon} \bigl\{\tau^{\ell, \epsilon} > t \bigr\}, 
\end{align*}
where the the probability $\mathbb{P}_{x_{\widehat{1,\ell}}}^{\epsilon} \bigl\{\cdot\}$ is conditioned with respect to $\bigl(\kappa_1^{\ast}, \kappa_2^{\ast}, \ldots, \kappa_{\ell}^{\ast}\bigr)$. Then, for any other admissible controls $u_i$, for $i=1, 2, \ldots, \ell$, we have
\begin{align*}
\mathcal{L}^{\epsilon, i} \psi_{\ast}^{j} \bigl(x^{1}, \ldots, x^{\epsilon_i, i}, u_i \bigr) + \lambda_{\kappa^{\ast}}^{\epsilon,\ell} \psi_{\ast}^{i} \bigl(x^{1}, \ldots, x^{\epsilon_i, i}\bigr) \le 0, \quad \forall t \ge 0.
\end{align*}
Let $Q \subset \mathbb{R}^d$ be a smooth bounded open domain containing $D_{\ell} \cup \partial D_{\ell}$. Let $\hat{\psi}^{\ell}$ and  $\hat{\lambda}^{\epsilon,\ell}$ be the principal eigenfunction-eigenvalue pairs for the eigenvalue problem of $\mathcal{L}^{\epsilon,\ell}$ on $\partial Q$. 

Let 
\begin{align*}
\hat{\tau}^{\ell,\epsilon} = \inf \Bigl\{ t > 0 \, \bigl\vert \, x^{\epsilon_{\ell}, \ell}(t) \notin Q \Bigr\}, \quad \ell =2, 3, \ldots, n.
\end{align*}
Then, under $u_i$, for $i=1,2, \ldots, \ell$, we have  
\begin{align*}
& \hat{\psi}^{\ell} \Bigl(\hat{x}^{1}, \hat{x}^2, \ldots, \hat{x}^{\ell}\Bigr) \\
&\quad\quad\quad\quad \ge \mathbb{E}_{x_{\widehat{1,\ell}}}^{\epsilon} \Biggl\{\exp\bigl( \hat{\lambda}^{\epsilon,\ell} t \bigr) \,\hat{\psi}^{\ell} \Bigl(x^{1}(t), x^{\epsilon_2, 2}(t), \ldots, x^{\epsilon_{\ell}, \ell}(t)\Bigr) \mathbf{1}\Bigl\{\hat{\tau}^{\ell,\epsilon} > t\Bigr\} \Biggr\} \\
  &\quad\quad\quad\quad \ge  \inf_{\bigl(x^{1}, x^{\epsilon_2, 2}, \ldots, x^{\epsilon_{\ell}, \ell}(t)\bigr) \in D_{\ell}}\Bigl\vert \hat{\psi}^{\ell} \Bigl(x^{1}(t), x^{\epsilon, 2}(t), \ldots, x^{\epsilon_{\ell}, \ell}(t)\Bigr) \Bigr\vert   \\
  & \quad\quad\quad\quad\quad\quad\quad\quad \times  \exp\bigl(\hat{\lambda}^{\epsilon,\ell}\, t\bigr) \mathbb{P}_{x_{\widehat{1,\ell}}}^{\epsilon}\Bigl\{\hat{\tau}^{\ell,\epsilon} > t\Bigr\}.
\end{align*}
Leading to
\begin{align*}
\hat{\lambda}^{\epsilon,\ell} \le - \limsup_{t \rightarrow \infty} \frac{1} {t} \log \mathbb{P}_{x_{\widehat{1,\ell}}}^{\epsilon} \bigl\{\tau^{\ell,\epsilon} > t \bigr\}.
\end{align*}
Letting $Q$ shrink to $D_{\ell}$ and using Proposition~4.10 of \cite{QuaSi08}, then we have $\hat{\lambda}^{\epsilon,\ell} \rightarrow \lambda_{\kappa^{\ast}}^{\epsilon,\ell}$.Thus, we have
\begin{align*}
\lambda_{\kappa^{\ast}}^{\epsilon,\ell} \le - \limsup_{t \rightarrow \infty} \frac{1} {t} \log \mathbb{P}_{x_{\widehat{1,\ell}}}^{\epsilon} \bigl\{\tau^{\ell,\epsilon} > t \bigr\}.
\end{align*}
Combining with Equation~\eqref{Eq5}, this establishes the optimality of $\kappa_{\ell}^{\ast}$ and the fact that $\lambda_{\kappa^{\ast}}^{\epsilon,\ell}$ is the minimum exit rate.

Conversely, let $\bigl(\hat{\kappa}_1^{\ast}, \hat{\kappa}_2^{\ast}, \ldots, \hat{\kappa}_{\ell}^{\ast}\bigr)$ be optimal Markov controls. Then, we have
\begin{align*}
\mathcal{L}^{\epsilon, i} \hat{\psi}^{i} \bigl(x^{1}, \ldots, x^{\epsilon_i, i}, \hat{\kappa}_{i}^{\ast}(x^{\epsilon_i, i}) \bigr) + \hat{\lambda}_{\hat{\kappa}^{\ast}}^{\epsilon,i} \, \hat{\psi}^{i} \bigl(x^{1}, \ldots, x^{\epsilon_i, i} \bigr) = 0
\end{align*}
and
\begin{align*}
\mathcal{L}^{\epsilon, i} \psi_{\ast}^{i} \bigl(x^{1}, \ldots, x^{\epsilon_i, i}, \hat{\kappa}_{i}^{\ast}(x^{\epsilon_i, i}) \bigr) + \lambda_{\kappa^{\ast}}^{\epsilon,i} \, \psi_{\ast}^{i} \bigl(x^{1}, \ldots, x^{\epsilon_i, i} \bigr) \le 0, \quad \forall t > 0,
\end{align*}
with $\hat{\lambda}_{\hat{\kappa}^{\ast}}^{\epsilon,i} = \lambda_{\kappa^{\ast}}^{\epsilon,i}$, for $i = 1, 2, \ldots, \ell$.

Furthermore, notice that $\psi_{\ast}^{j}$ is a scalar multiple of $\hat{\psi}^{j}$ and, at $\bigl(\hat{x}^{1}, \hat{x}^2, \ldots, \hat{x}^{\ell}\bigr) \in \Omega_{\ell}$ (cf. \cite[Theorem~1.4(a)]{QuaSi08}). Then, we see that $\hat{\kappa}_{\ell}^{\ast}$ is a maximizing measurable selector in Equation~\eqref{Eq11}. This completes the proof of Proposition~\ref{P2}.
\end{proof}

\subsection{Exit probabilities} \label{S2(3)} 
In this subsection, for a fixed (given) $T > 0$ and $\ell \in \{2, 3, \ldots, n\}$, we relate the attainable exit probability with which the diffusion process $x^{\ell}(t)$ exits from the domain $D_{\ell}$, i.e.,
\begin{align*}
q^{\ell} \bigl(x^{1}, x^{2}, \ldots, x^{\ell}\bigr) = \mathbb{P}_{x_{\widehat{1,\ell}}}\bigl\{ \tau_{\kappa^{\ast}}^{\ell} \le T \bigr\},
\end{align*}
and a family of smooth solutions for Dirichlet problems on $\Omega_{\ell}$ that are associated with the nondegenerate diffusion process $x^{\epsilon, \ell}(t)$ (see Equation~\eqref{Eq13}) as the limiting case, when $\epsilon_{\ell} \rightarrow 0$, for $\ell = 2,3, \dots, n$.
\begin{remark} \label{R4}
Note that $\tau_{\kappa^{\ast}}^{\ell}$ is the exit-time for the diffusion process $x^{\ell}(t)$ from the domain $D_{\ell}$, with respect to the admissible optimal Markov controls $\kappa_i^{\ast}$, for $i=1,2, \ldots, \ell$.
\end{remark}

\begin{proposition} \label{P3}
Suppose that a set of admissible optimal Markov controls $\kappa_i^{\ast}$, for $i=1,2, \ldots, n$, satisfies Proposition~\ref{P2}. Let $\psi \bigl(x^{1}, x^{2}, \ldots, x^{\ell} \bigr)$, for $t \in (0,\, T)$, be a continuous function on $\partial D_{\ell}$. Then, the attainable exit probability $q^{\ell} \bigl(x^{1}, x^{2}, \ldots, x^{\ell}\bigr)$ with which the diffusion process $x^{\ell}(t)$ exits from the domain $D_{\ell}$ is a smooth solution to the following Dirichlet problem
\begin{align}
\left.\begin{array}{c}
\mathcal{L}^{\ell} \upsilon^{\ell} \bigl(x^{1}, x^{2}, \ldots, x^{\ell} \bigr) = 0 \quad \text{in} \quad \Omega_{\ell}  \\
 \upsilon^{\ell} \bigl(x^{1}, x^{2}, \ldots, x^{\ell} \bigr) = \mathbb{E}_{x_{\widehat{1,\ell}}} \bigl\{\psi^{\ell} \bigl(x^{1}, x^{2}, \ldots, x^{\ell} \bigr)\bigr\} \quad \text{on} \quad  \partial \Omega_{\ell}
\end{array}\right\}.  \label{Eq13}
\end{align}
Moreover, it is a continuous function on $\Omega_{\ell}$ for each $\ell = 2, 3, \ldots, n$.
\end{proposition}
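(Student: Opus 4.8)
The plan is to realize the degenerate Dirichlet problem in Equation~\eqref{Eq13} as the vanishing-noise limit ($\epsilon_\ell \rightarrow 0$) of a family of uniformly elliptic Dirichlet problems attached to the nondegenerate diffusion $x^{\epsilon_\ell,\ell}(t)$ of Equation~\eqref{Eq5}, and then to identify that limit with the attainable exit probability $q^\ell$ via a probabilistic (Dynkin) representation. Throughout I fix $\ell\in\{2,3,\ldots,n\}$ and compose the systems with the optimal Markov controls $\kappa_i^\ast$ furnished by Proposition~\ref{P2}.

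First I would treat the nondegenerate problem. Since the independent processes $W_j(\cdot)$, $j=2,\ldots,\ell$, contribute the terms $\tfrac{\epsilon_j}{2}\operatorname{tr}\{I_{d\times d}\bigtriangledown^2(\cdot)\}$ to the generator $\mathcal{L}_{\kappa}^{\epsilon,\ell}$ in Equation~\eqref{Eq6}, and the first block already satisfies $\sigma(x^1)\sigma^T(x^1)\ge \lambda I_{d\times d}$, the operator $\mathcal{L}_{\kappa}^{\epsilon,\ell}$ is uniformly elliptic on $\Omega_\ell$. For the continuous boundary datum $g\triangleq \mathbb{E}_{x_{\widehat{1,\ell}}}\{\psi^\ell\}$ on $\partial\Omega_\ell$, classical elliptic theory together with Krylov's $W^{2,p}_{loc}$ estimates (e.g.\ \cite[Chapter~II]{Bor89}) then supplies a unique $\upsilon^{\epsilon,\ell}\in W^{2,p}_{loc}(\Omega_\ell)\cap C(\bar\Omega_\ell)$ solving $\mathcal{L}_{\kappa}^{\epsilon,\ell}\upsilon^{\epsilon,\ell}=0$ in $\Omega_\ell$ with $\upsilon^{\epsilon,\ell}=g$ on $\partial\Omega_\ell$. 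Arguing exactly as in the proof of Proposition~\ref{P1} (Krylov's It\^{o} formula plus optional sampling) I would represent it as
\begin{align*}
\upsilon^{\epsilon,\ell}\bigl(\hat x^1,\ldots,\hat x^\ell\bigr)=\mathbb{E}_{x_{\widehat{1,\ell}}}^{\epsilon}\Bigl\{ g\bigl(x^1(\tau^{\ell,\epsilon}),x^{\epsilon_2,2}(\tau^{\ell,\epsilon}),\ldots,x^{\epsilon_\ell,\ell}(\tau^{\ell,\epsilon})\bigr)\Bigr\},
\end{align*}
where the transversality in Assumption~\ref{AS1}(b), $\langle\hat m_\ell(x^1,\ldots,y),\alpha(y)\rangle>0$ for $y\in\partial D_\ell$, makes $\partial D_\ell$ non-characteristic so that $\tau^{\ell,\epsilon}<\infty$ a.s.\ and $g$ is attained continuously.

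Next I would pass to the limit $\epsilon_\ell\rightarrow0$. The maximum principle bounds $\{\upsilon^{\epsilon,\ell}\}$ uniformly by $\|g\|_\infty$, while the H\"ormander condition of Assumption~\ref{AS1}(a) (cf.\ Remark~\ref{R3}) provides interior regularity estimates that are uniform as $\epsilon_\ell\rightarrow0$, hence local equicontinuity. Extracting a locally uniformly convergent subsequence $\upsilon^{\epsilon,\ell}\rightarrow\upsilon^\ell$ and passing to the limit in the equation gives $\mathcal{L}^\ell\upsilon^\ell=0$ in $\Omega_\ell$, and hypoellipticity upgrades $\upsilon^\ell$ to a $C^\infty$ (smooth) solution there. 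To identify the limit I would use Assumption~\ref{AS1}(b) and the support/exit description of Stroock--Varadhan \cite[Section~7]{StrVa72} to obtain $\tau^{\ell,\epsilon}\rightarrow\tau^\ell$ a.s., so that the representation above passes to the limit and yields
\begin{align*}
\upsilon^\ell\bigl(x^1,\ldots,x^\ell\bigr)=\mathbb{E}_{x_{\widehat{1,\ell}}}\Bigl\{\psi^\ell\bigl(x^1(\tau^\ell),\ldots,x^\ell(\tau^\ell)\bigr)\Bigr\},
\end{align*}
which is exactly the probabilistic representation of the attainable exit probability $q^\ell$; continuity of $q^\ell$ on $\Omega_\ell$ then follows from the local uniform convergence and the boundary regularity already established.

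The main obstacle will be the loss of ellipticity in the limit: as $\epsilon_\ell\rightarrow0$ the operator degenerates in the directions $2,\ldots,\ell$, so a priori the boundary datum need not be attained and $\partial\Omega_\ell$ may contain characteristic, non-regular points. It is precisely the transversality of Assumption~\ref{AS1}(b) --- the drift $\hat m_\ell$ pointing strictly outward at $\partial D_\ell$ --- together with the Stroock--Varadhan exit description that rules this out and secures continuity up to the boundary. A second, more technical, difficulty is verifying that the interior estimates underpinning the compactness of $\{\upsilon^{\epsilon,\ell}\}$ are genuinely uniform in $\epsilon_\ell$, which I would handle through the hypoellipticity supplied by Assumption~\ref{AS1}(a).
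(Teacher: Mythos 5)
Your starting point --- realizing \eqref{Eq13} as the vanishing-noise limit of Dirichlet problems for the nondegenerate system \eqref{Eq5} --- is the same as the paper's, but your route has two genuine gaps. The first is the identification of the limit with $q^{\ell}$. The attainable exit probability is $q^{\ell} = \mathbb{P}_{x_{\widehat{1,\ell}}}\bigl\{\tau_{\kappa^{\ast}}^{\ell} \le T\bigr\}$: the expectation of an \emph{indicator} of exit before the finite horizon $T$. Your stationary elliptic problem with continuous boundary datum $g$ and its Dynkin representation $\mathbb{E}\bigl\{g\bigl(x(\tau^{\ell,\epsilon})\bigr)\bigr\}$ never sees the horizon $T$, and its limit $\mathbb{E}\bigl\{\psi^{\ell}\bigl(x(\tau^{\ell})\bigr)\bigr\}$ is not $q^{\ell}$ for any continuous $\psi^{\ell}$. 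The paper handles this in two steps you are missing: it works throughout with the truncated time $\theta^{\epsilon}=\tau_{\kappa^{\ast}}^{\epsilon,\ell}\wedge T$ (not $\tau^{\ell,\epsilon}$), and it introduces a sequence $\psi_k$ of continuous functions approximating the indicator of the exit set $\Gamma_{\ell}$, obtains the corresponding limits $q_k^{\ell}$ as $\epsilon_j \rightarrow 0$, and only at the end lets $k\rightarrow\infty$ to recover $q^{\ell}$ almost everywhere. Without this approximation layer your argument proves a statement about expectations of continuous boundary data, not about the exit probability in Proposition~\ref{P3}.

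The second gap is compactness. You assert that Assumption~\ref{AS1}(a) yields interior estimates ``uniform as $\epsilon_{\ell}\rightarrow 0$'', hence local equicontinuity of $\{\upsilon^{\epsilon,\ell}\}$. Hypoellipticity of the limiting operator $\mathcal{L}^{\ell}$ gives smoothness of solutions of the \emph{limit} equation; it does not by itself give estimates for the perturbed operators that are uniform in $\epsilon$, and establishing such uniform subelliptic estimates is the hardest analytic point of the route you chose --- you flag it as a ``technical difficulty'', but it is precisely what you would have to prove. The paper avoids it entirely: Lemma~\ref{L1} (proved with Gronwall's inequality and Assumption~\ref{AS1}(b)) gives almost sure convergence of the trajectories, of the stopped times $\theta^{\epsilon}\rightarrow\theta$, and of the stopped positions; dominated convergence then gives \emph{pointwise} convergence $\upsilon_k^{\ell}\rightarrow q_k^{\ell}$ of the uniformly bounded expectations; one passes to the limit in the weak formulation $\int_{\Omega_{\ell}}\bigl({\mathcal{L}^{\ell}}^{\ast}\phi + \sum\nolimits_{j=2}^{\ell} \tfrac{\epsilon_j}{2}\triangle_{x^{\epsilon_j,j}}\phi\bigr)\upsilon_k^{\ell}\,d\Omega_{\ell}=0$ against test functions $\phi\in C_0^{\infty}(\Omega_{\ell})$, where bounded pointwise convergence suffices; and only then is hypoellipticity invoked, to upgrade the distributional solution $q^{\ell}$ to a smooth one. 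If you replace your Arzel\`a--Ascoli step by this probabilistic-plus-distributional limit, and add the indicator approximation above, your outline becomes essentially the paper's proof.
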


In order to prove the above proposition, we will consider again the nondegenerate diffusion process $(x^{1}(t), x^{\epsilon_2, 2}(t), \ldots, x^{\epsilon_{\ell}, \ell}(t))$, for $\ell = 2, 3, \ldots, n$ (see Equation~\eqref{Eq5}). Then, we relate the attainable exit probability of this diffusion process with that of the Dirichlet problems in Equation~\eqref{Eq13} in the limiting case, when $\epsilon_j \rightarrow 0$, for $j = 2,3, \dots, \ell$. 

Let us define the following notations that will be useful in the sequel.
\begin{align*}
\begin{array}{c}
    \theta = \tau_{\kappa^{\ast}}^{\ell} \wedge T, \quad\quad  \theta^{\epsilon} = \tau_{\kappa^{\ast}}^{\epsilon,\ell} \wedge T,\\
   \bigl\Vert x^{\epsilon_\ell, \ell} - x^{\ell} \bigr\Vert_t = \sup\limits_{0 \le r \le t} \Bigl\vert x^{\epsilon_\ell, \ell}(r) - x^{\ell}(r) \Bigr\vert,\quad \ell = 2, 3, \ldots, n,
\end{array}
\end{align*}
where $\tau_{\kappa^{\ast}}^{\epsilon,\ell}$ is the exit-time for the diffusion process $x^{\epsilon_\ell, \ell}(t)$ from the domain $D_{\ell}$ (cf. $\tau_{\kappa^{\ast}}^{\ell}$ in Remark~\ref{R4}).

Then, we need the following lemma, which is useful for proving the above proposition.

\begin{lemma} \label{L1}
For any initial point $(x_0^{1}, x_0^{2}, \ldots, x_0^{\ell}) \in \Omega_{\ell}$, the following statements hold true
\begin{enumerate} [(i)]
\item $\bigl\Vert x^{\epsilon_\ell, \ell} - x^{\ell} \bigr\Vert_t  \rightarrow 0$, 
\item $\theta^{\epsilon} \rightarrow \theta$, and 
\item $\bigl(x^{\epsilon_2, 2}(\theta^{\epsilon}), \ldots, x^{\epsilon_\ell, \ell}(\theta^{\epsilon})\bigr) \rightarrow \bigl(x^{2}(\theta), \ldots, x^{\ell}(\theta)\bigr)$, 
\end{enumerate}
almost surely, as $\epsilon_\ell \rightarrow 0$, for $\ell = 2, 3, \ldots, n$.
\end{lemma}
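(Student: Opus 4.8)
The plan is to argue pathwise, on the single probability space carrying the driving Wiener process $W(\cdot)$ together with the independent perturbations $W_j(\cdot)$, $j=2,\ldots,\ell$. The key observation is that the first components of the degenerate process $x^{\ell}$ and of the nondegenerate process $x^{\epsilon_\ell,\ell}$ in Equation~\eqref{Eq5} solve the \emph{same} stochastic differential equation driven by the \emph{same} $W(\cdot)$; by strong uniqueness (the coefficients $\hat{m}_1$ and $\sigma$ being Lipschitz) they coincide almost surely, so $x^{1}(\cdot)$ is common to both systems. Hence the whole discrepancy between $x^{\epsilon_\ell,\ell}$ and $x^{\ell}$ is produced only by the added noises $\sqrt{\epsilon_j}\,dW_j$ in the components $j=2,\ldots,\ell$, and it should vanish as the $\epsilon_j\to0$.

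For (i), set $e_j(t)=x^{\epsilon_j,j}(t)-x^{j}(t)$ for $j=2,\ldots,\ell$, with $e_1\equiv0$. Subtracting the integral forms of the two systems and using that each $\hat{m}_j$ is (uniformly) Lipschitz in its spatial arguments — a consequence of the boundedness of the first derivatives of $m_j$ assumed after Equation~\eqref{Eq1} — one gets, for $j=2,\ldots,\ell$,
\begin{align*}
\bigl|e_j(t)\bigr| \le K\int_0^t \sum_{k=2}^{j}\bigl|e_k(s)\bigr|\,ds + \sqrt{\epsilon_j}\,\bigl|W_j(t)\bigr|.
\end{align*}
Taking suprema over $[0,t]$, summing in $j$, and applying Gronwall's inequality yields
\begin{align*}
\bigl\Vert x^{\epsilon_\ell,\ell}-x^{\ell}\bigr\Vert_t \le \Bigl(\sum_{j=2}^{\ell}\sqrt{\epsilon_j}\,\bigl\Vert W_j\bigr\Vert_t\Bigr)e^{Ct},
\end{align*}
where $C$ depends only on the Lipschitz constants and on $\ell$. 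Since each $W_j$ has almost surely continuous paths, $\bigl\Vert W_j\bigr\Vert_t<\infty$ a.s., and the right-hand side tends to $0$ as the $\epsilon_j\to0$, proving (i).

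Statement (ii) is the heart of the argument and I expect it to be the main obstacle. Uniform convergence from (i) gives \emph{lower} semicontinuity of the exit times for free: if $t<\tau^{\ell}$ then $\{x^{\ell}(s):s\le t\}$ is a compact subset of the open set $D_\ell$, hence at positive distance from $\partial D_\ell$, so for all small $\epsilon_j$ the path $x^{\epsilon_\ell,\ell}$ also stays in $D_\ell$ on $[0,t]$; letting $t\uparrow\tau^{\ell}$ gives $\liminf \tau^{\epsilon,\ell}\ge\tau^{\ell}$. The reverse inequality is where the transversality Assumption~\ref{AS1}(b) enters: by the footnote to that assumption the limiting trajectory exits almost surely on $\Gamma_\ell$, i.e. at a boundary point $y=x^{\ell}(\tau^{\ell})$ with $\langle\hat{m}_\ell(x^{1},\ldots,y),\alpha(y)\rangle>0$. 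Because the $\ell$th component of the degenerate system carries no noise ($dx^{\ell}=\hat{m}_\ell\,dt$), this strictly outward drift forces $x^{\ell}$ to leave $\bar{D}_\ell$ transversally; thus for every $\delta>0$ there is a time in $(\tau^{\ell},\tau^{\ell}+\delta)$ at which $x^{\ell}$ lies at positive distance outside $\bar{D}_\ell$, and by (i) so does $x^{\epsilon_\ell,\ell}$ for all small $\epsilon_j$. Hence $\limsup \tau^{\epsilon,\ell}\le\tau^{\ell}$, so $\tau^{\epsilon,\ell}\to\tau^{\ell}$ almost surely; taking the minimum with the constant $T$ gives $\theta^{\epsilon}\to\theta$ (the case $\tau^{\ell}>T$ is covered by lower semicontinuity alone, since then $\theta^{\epsilon}=T=\theta$ eventually). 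The delicate point throughout is ruling out the perturbed paths grazing $\partial D_\ell$ tangentially; the strict outward drift of Assumption~\ref{AS1}(b) is exactly what makes the exit map upper semicontinuous.

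Finally, (iii) follows by combining (i) and (ii) with the almost-sure continuity of paths. For each $j$,
\begin{align*}
\bigl|x^{\epsilon_j,j}(\theta^{\epsilon})-x^{j}(\theta)\bigr| \le \bigl\Vert x^{\epsilon_j,j}-x^{j}\bigr\Vert_T + \bigl|x^{j}(\theta^{\epsilon})-x^{j}(\theta)\bigr|.
\end{align*}
The first term vanishes by (i), and the second by (ii) together with the uniform continuity of the sample path $x^{j}(\cdot)$ on the compact interval $[0,T]$. This yields the claimed convergence of the boundary values and completes the proof.
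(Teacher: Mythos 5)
Your proof is correct and follows essentially the same route as the paper: a Lipschitz--Gronwall estimate bounding the discrepancy by $\sqrt{\epsilon_j}\,\Vert W_j\Vert_t$ for (i), a two-sided semicontinuity argument for (ii) in which the lower bound on the exit times comes from the uniform path convergence of (i) and the upper bound comes from the transversal (strictly outward) drift of Assumption~\ref{AS1}(b), and the combination of (i), (ii) with sample-path continuity for (iii). If anything, your part (i) is slightly more careful than the paper's, since you propagate the errors $e_2,\ldots,e_\ell$ through the cascade before applying Gronwall, whereas the paper's displayed inequality bounds the $\ell$th component's error by an integral of that same error alone.
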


\begin{proof} 
Part~(i): Note that, for any $\ell \in \{ 2, 3, \ldots, n\}$, the following inequality holds 
\begin{align*}
\bigl\vert x^{\epsilon_\ell, \ell}(r) - x^{\ell}(r) \bigr\vert &\le  \int_{0}^t \Bigl \vert \hat{f}_{\ell}\bigl(x^{1}(r), x^{\epsilon_2, 2}(r) \ldots, x^{\epsilon_{\ell}, \ell}(r)\bigr) - \hat{f}_{\ell}\bigl(x^{1}(r), x^{2}(r), \ldots, x^{\ell}(r)\bigr) \Bigr\vert dr  \\
& \quad \quad\quad\quad\quad\quad + \sqrt{\epsilon_{\ell}} \bigl \vert W_{\ell}(t) \bigr\vert,  \\
& \le C \int_{0}^t \bigl \vert x^{\epsilon_\ell, \ell}(r) - x^{\ell}(r) \bigr\vert dt + \sqrt{\epsilon_{\ell}} \bigl \vert W_{\ell}(t) \bigr\vert,
\end{align*}
such that
\begin{align*}
\bigl \Vert x^{\epsilon_\ell, \ell}(r) - x^{\ell}(r) \bigr \Vert_t \le C \int_{0}^t \bigl \vert x^{\epsilon_\ell, \ell}(r) - x^{\ell}(r) \bigr\vert dt + \sqrt{\epsilon_{\ell}} \vert W_{\ell}(t)\vert,
\end{align*}
where $C$ is a Lipschitz constant. Using the Gronwall-Bellman inequality, we obtain the following
\begin{align*}
\bigl \Vert x^{\epsilon_\ell, \ell}(r) - x^{\ell}(r) \bigr \Vert_t \le c \sqrt{\epsilon_{\ell}} \bigl\Vert W_{\ell} \bigr\Vert_t,
\end{align*}
where $c$ is a constant that depends on $C$ and $t$. Hence, we have 
\begin{align*}
\bigl \Vert x^{\epsilon_\ell, \ell}(r) - x^{\ell}(r) \bigr \Vert_t  \rightarrow 0 \quad \text{as} \quad \epsilon_\ell \rightarrow 0,
\end{align*}
for each $\ell = 2, 3, \ldots, n$.

Part~(ii): Next, let us show $\theta$ satisfies the following bounds
\begin{align*}
\theta^{\ast} \le \theta \le \theta_{\ast},
\end{align*}
almost surely, where
\begin{align*}
\theta^{\ast} = \min_{\ell \in \{2, 3, \ldots, n\}} \Bigl\{\limsup_{\epsilon_\ell \rightarrow 0} \theta^{\epsilon}\Bigr\}
\end{align*}
and
\begin{align*}
\theta_{\ast} =\max_{\ell \in \{2, 3, \ldots, n\}} \Bigl\{\liminf_{\epsilon_\ell \rightarrow 0} \theta^{\epsilon}\Bigl\}.
\end{align*}
Notice that $D_{\ell}$ is open, then it follows from Part~(i) that if $\theta = \tau^{\ell} \wedge T=T$, then $\theta^{\epsilon} = T$, almost surely, for all $\epsilon_{\ell}$ sufficiently small. Then, we will get Part~(ii). Similarly, if $\theta^{\epsilon} = T$ and $x^{\epsilon, \ell}(\theta^{\epsilon}) \in D_{\ell}$, then the statement in Part~(i) implies Part~(ii). Then, we can assume that $x^{\ell}(\theta) \in \partial D_{\ell}$ and $x^{\epsilon, \ell}(\theta^{\epsilon}) \in \partial D_{\ell}$. Moreover, if $x^{\epsilon, \ell}(\theta^{\epsilon}) \in \partial D_{\ell}$, then, from Part~(i), $x^{\ell}(\theta_{\ast}) \in \partial D_{\ell}$, almost surely, and, consequently, $\theta_{\ast} \ge \theta$, almost surely.

For the case $\theta^{\ast} \le \theta$, let us define an event $\Psi_{a,\alpha}$ (with $a >0$ and $\alpha > 0$) as follow: there exists $t \in [\theta, \theta + a]$ such that the distance $\varrho \bigl(x^{\ell}(t), \partial D_{\ell} \bigr) \ge \alpha$. Notice that if this holds together with $\bigl\Vert x^{\epsilon_\ell, \ell} - x^{\ell} \bigr\Vert_t < \alpha$, then we have $\theta^{\epsilon} < \theta + a$. Hence, from Part~(i), we have $\theta^{\ast} < \theta + a$ on $\Psi_{a,\alpha}$, almost surely.

On the other hand, from Assumption~\ref{AS1}(b), we have the following
\begin{align*}
\mathbb{P}_{x_{\widehat{1,\ell}}} \Bigl\{ \bigcup\nolimits_{\alpha >0} \Psi_{a,\alpha}\Bigr\} = 1.
\end{align*}
Then, we have
\begin{align*}
\mathbb{P}_{x_{\widehat{1,\ell}}} \Bigl\{ \theta^{\ast} < \theta + a\Bigr\} = 1,
\end{align*}
since $a$ is an arbitrary, we obtain $\theta^{\ast} < \theta$, almost surely.

Finally, notice that the statement in Part~(iii) is a consequence of Part~(i) and Part~(ii). This completes the proof of Lemma~\ref{L1}.
\end{proof}
\begin{proof} [Proof of Proposition~\ref{P3}]
Note that, from Assumption~\ref{AS1}(b), it is sufficient to show that $q^{\ell} (x^{1}, \ldots, x^{\ell})$ is a smooth solution (almost everywhere in $\Omega_{\ell}$ with respect to Lebesgue measure) to the Dirichlet problems in Equation~\eqref{Eq13}.

For a fixed $\ell \in \{ 2, 3, \ldots, n\}$, consider the following infinitesimal generator which corresponds to the nondegenerate diffusion processes $(x^{1}(t), x^{\epsilon_2, 2}(t), \ldots, x^{\epsilon_\ell, \ell}(t))$
\begin{align}
 \mathcal{L}^{\epsilon,\ell} \upsilon^{\ell} + \sum\nolimits_{j=2}^\ell \frac{\epsilon_j}{2} \triangle_{x^{\epsilon_j, j}} \upsilon^{\ell}= 0 \quad \text{in} \quad \Omega_{\ell},  \label{Eq14}
\end{align}
where $\triangle_{x^{\epsilon_j, j}}$ is the Laplace operator in the variable $x^{\epsilon_j, j}$ and $\mathcal{L}^{\epsilon, \ell}$ is the infinitesimal generator in Equation~\eqref{Eq5}.

Note that the infinitesimal generator $\mathcal{L}^{\epsilon,\ell}$ is a uniformly parabolic equation and, from Assumption~\ref{AS1}(b), its solution satisfies the following boundary condition
\begin{align}
 \upsilon^{\ell} \bigl(x^{1}, x^{\epsilon_2, 2}, \ldots, x^{\epsilon_\ell, \ell} \bigr) = \psi \bigl(x^{1}, x^{\epsilon_2, 2}, \ldots, x^{\epsilon_\ell, \ell} \bigr) \quad \text{on} \quad \partial D,  \label{Eq15}
\end{align}
where
\begin{align}
 \upsilon^{\ell} \bigl(x^{1},x^{\epsilon_2, 2}, \ldots, x^{\epsilon_\ell, \ell} \bigr) = \mathbb{E}_{x_{\widehat{1,\ell}}}^{\epsilon} \Bigl\{ \psi \bigl(x^{1}(\theta^{\epsilon}), x^{\epsilon_2, 2}(\theta^{\epsilon}), \ldots, x^{\epsilon_\ell, \ell}(\theta^{\epsilon}) \bigr) \Bigr\},  \label{Eq16}
\end{align}
with $\theta^{\epsilon} = \tau^{\epsilon,\ell} \wedge T$.

In particular, let $\psi_k$, with $k=1, 2,\ldots$, be a sequence of bounded functions that are continuous on $\partial D$ and satisfying the following conditions
\begin{align*}
 \psi_k \bigl(x^{1}, x^{\epsilon_2, 2}, \ldots, x^{\epsilon_\ell, \ell} \bigr) = \left\{\begin{array}{l l}
1  &\text{if} \,\, \bigl(x^{1}(t), x^{\epsilon_2, 2}(t), \ldots, x^{\epsilon_{\ell}, \ell}(t) \bigr) \in \Gamma_{\ell}\\
0 &\text{if} \,\,  \bigl(x^{1}(t), x^{\epsilon_2, 2}(t), \ldots, x^{\epsilon_{\ell}, \ell}(t) \bigr) \in \Omega_{\ell}  \\
&  \quad \quad\quad\quad\quad\quad\quad\quad\quad \text{and} \quad \varrho\bigl(x^{\epsilon, \ell}, \partial D_{\ell} \bigr) > \frac{1}{k}
\end{array}\right. 
\end{align*}
and
\begin{align*}
0 \le \psi_k \bigl(x^{1}, x^{\epsilon_2, 2}, \ldots, x^{\epsilon_\ell, \ell} \bigr) \le 1 \,\, & \text{if} \,\,  \bigl(x^{1}(t), x^{\epsilon_2, 2}(t), \ldots, x^{\epsilon_{\ell}, \ell}(t) \bigr) \in \Omega_{\ell} \\
&  \quad \quad\quad\quad\quad\quad\quad\quad \text{and} \quad  \varrho\bigl(x^{\epsilon, \ell}, \partial D_{\ell} \bigr) \le \tfrac{1}{k}.
\end{align*}
Moreover, the bounded functions further satisfy the following
\begin{align}
\bigl\vert \psi_k - \psi_l \bigr\vert \rightarrow 0 \quad \text{as} \quad k,l \rightarrow \infty  \label{Eq17}
\end{align}
uniformly on any compact subset of $\bar{\Omega}_{\ell}$. Then, with $\psi = \psi_k$,
\begin{align*}
 \upsilon_k^{\ell} \bigl(x^{1}, x^{\epsilon_2, 2}, \ldots, x^{\epsilon_\ell, \ell} \bigr) = \mathbb{E}_{x_{\widehat{1,\ell}}}^{\epsilon} \Bigl\{ \psi_k \bigl(x^{1}(\theta^{\epsilon}), x^{\epsilon_2, 2}(\theta^{\epsilon}), \ldots, x^{\epsilon, \ell}(\theta^{\epsilon}) \bigr) \Bigr\}
\end{align*}
satisfies Equations~\eqref{Eq15} and \eqref{Eq16}. Then, from the continuity of $\psi_k$ (cf. Lemma~\ref{L1}, Parts~(i)-(iii)) and the Lebesgue's dominated convergence theorem (see \cite[Chapter~4]{Roy88}), we have the following
\begin{align}
 \upsilon_k^{\ell} \bigl(x^{1}, x^{\epsilon_2, 2}, \ldots, x^{\epsilon_\ell, \ell} \bigr) \rightarrow \underbrace{\mathbb{E}_{x_{\widehat{1,\ell}}}^{\epsilon} \Bigl\{ \psi_k \bigl(x^{1}(\theta^{\epsilon}), x^{\epsilon_2, 2}(\theta^{\epsilon}), \ldots, x^{\epsilon, \ell}(\theta^{\epsilon}) \bigr) \Bigr\}}_
{\triangleq  q_k^{\ell} \bigl(x^{1}, \ldots, x^{\ell} \bigr)}, \label{Eq18}
\end{align}
where $\theta \rightarrow \theta^{\epsilon}$ as $\epsilon_j \rightarrow 0$ for all $j \in \{2, 3, \ldots, \ell\}$. Furthermore, in the above equation, $\bigl(x^{1}(t), x^{2}(t), \ldots, x^{\ell}(t) \bigr)$ is a solution to Equation~\eqref{Eq13}, when $\epsilon_j=0$, for $j=2, 3, \ldots, \ell$, with an initial condition $\bigl(x^{\epsilon,1}(0), x^{\epsilon_2, 2}(0), \ldots, x^{\epsilon_{\ell}, \ell}(0)\bigr) = \bigl(x_0^{1}, x_0^{2}, \ldots, x_0^{\ell}\bigr)$.

Note that $q_k^{\ell} \bigl(x^{1}, \ldots, x^{\ell} \bigr)$ satisfies Equation~\eqref{Eq14}, with $\upsilon^{\ell} =  \upsilon_k^{\ell}$, and, in addition, it is a distribution solution to the Dirichlet problem in Equation~\eqref{Eq13}, i.e.,
\begin{align*}
\int_{\Omega_{\ell}} {\mathcal{L}^{\ell}}^{\ast} \phi q_k^{\ell} d \Omega_{\ell}, &= \lim_{\epsilon_j \rightarrow 0,\,\forall j \in \{2, 3, \ldots, \ell\}}\int_{\Omega_{\ell}} \Bigl({\mathcal{L}^{\ell}}^{\ast} \phi + \sum\nolimits_{j=2}^\ell \frac{\epsilon_j}{2} \triangle_{x^{\epsilon_j, j}} \phi \Bigr) \upsilon_k^{\ell} d \Omega_{\ell}, \\
  &=0, 
\end{align*}
for any test function $\phi \in C_0^{\infty}(\Omega_{\ell})$. 

Finally, notice that 
\begin{align*}
q^{\ell} \bigl(x^{1}, \ldots, x^{\ell} \bigr) = \lim_{k \rightarrow \infty} q_k^{\ell} \bigl(x^{1}, \ldots, x^{\ell} \bigr),
\end{align*}
almost everywhere in $\Omega_{\ell}$. From Assumption~\ref{AS1}(a) (i.e., the hypoellipticity), $q^{\ell} \bigl(x^{1}, x^{2}, \ldots, x^{\ell} \bigr)$ is a smooth solution to Equation~\eqref{Eq13} (almost everywhere) in $\Omega_{\ell}$ and continuous on the boundary of $\Omega_{\ell}$. This completes the proof of Proposition~\ref{P3}.
\end{proof}

\begin{remark} \label{R5}
Here, we remark that the statements in Proposition~\ref{P3} will make sense only if we have the following conditions
\begin{align*}
\tau_{\kappa^{\ast}}^{1}\, \ge\, \tau_{\kappa^{\ast}}^{2}\, \ge \,\cdots \,\ge \, \tau_{\kappa^{\ast}}^{\ell}, 
\end{align*}
where $\tau_{\kappa^{\ast}}^{1} = \inf \bigl\{ t > 0 \, \bigl\vert \, x^{1}(t) \in \partial D_1 \bigr\}$. Moreover, such conditions depend on the constituting subsystems in Equation~\eqref{Eq1}, the admissible controls from the measurable sets $\prod_{i=1}^{\ell} \mathcal{U}_i$, as well as on the given domains $D_{i}$, for $i = 1, 2, \ldots, \ell$.
\end{remark}


\end{document}